\title{ Real solutions of a problem in enumerative geometry}
\author{L\'aszl\'o M. Feh\'er}
\address{Department of Analysis, E\"otv\"os University, Budapest, Hungary}
\email{lfeher@renyi.mta.hu}
\author{\'Akos K.\ Matszangosz}
\address{Department of Analysis, E\"otv\"os University, Budapest, Hungary}
\email{matszangosz.akos@gmail.com}
\keywords{real enumerative geometry, enumerative real algebraic geometry, modulo four congruence, real Grassmannian, real Schubert variety, four medials}
\subjclass[2010]{Primary 14N10, 14PXX; Secondary 14M15, 14P99}
\thanks{The first named author is supported by the OTKA grants 72537 and 81203.}
\newtheorem{fact}{Fact}[section]
\newtheorem{lemma}[fact]{Lemma}
\newtheorem{theorem}[fact]{Theorem}
\newtheorem{defi}[fact]{Definition}
\newtheorem{exa}[fact]{Example}
\newtheorem{exas}[fact]{Examples}
\newtheorem{rremark}[fact]{Remark}
\newtheorem{proposition}[fact]{Proposition}
\newtheorem{corollary}[fact]{Corollary}
\newenvironment{remark}{\begin{rremark} \rm}{\end{rremark}}
\newenvironment{definition}{\begin{defi} \rm}{\end{defi}}
\newenvironment{examples}{\begin{exas} \rm}{\end{exas}}
\def\semmi#1{}
\newcommand{\smx}[4]{\bigl(\begin{smallmatrix}{#1}&{#2}\\{#3}&{#4}\end{smallmatrix}\bigr)}
\DeclareMathOperator{\sol}{Sol}
\DeclareMathOperator{\C}{\mathbf C}
\DeclareMathOperator{\Z}{\mathbf Z}
\DeclareMathOperator{\R}{\mathbf R}
\DeclareMathOperator{\Q}{\mathbf Q}
\renewcommand{\P}{\mathbf P}
\DeclareMathOperator{\Hom}{Hom}
\DeclareMathOperator{\gr}{Gr}
\DeclareMathOperator{\Gr}{Gr}
\DeclareMathOperator{\res}{Res}
\DeclareMathOperator{\sym}{Sym}
\DeclareMathOperator{\Stab}{Stab}
\DeclareMathOperator{\GL}{GL}
\DeclareMathOperator\im{Im}
\def\iso{\cong}
\def\ds{\displaystyle}
\newcommand{\bigtimes}{\mathop{\textsf{X}}}
\newcommand{\twocases}[7][1mm]{{#2}\begin{cases} {#3}& \hbox{\rm if}\ \ {#4} \\[{#1}] {#5}& \hbox{\rm #6}\ \ {#7}\end{cases}}
\begin{document}

\begin{abstract}
We study a 2-parameter family of enumerative problems over the reals. Over the complex field, these problems can be solved by Schubert calculus. In the real case the number of solutions can be different on the distinct connected components of the configuration space, resulting in a solution function. The cohomology calculation in the real case only gives the signed sum of the solutions, therefore in general it only gives a lower bound on the range of the solution function. We calculate the solution function for the 2-parameter family and we show that in the even cases the solution function is constant modulo 4. We show how to determine the sign of a solution and describe the connected components of the configuration space. We translate the problem to the language of quivers and also give a geometric interpretation of the sign. Finally, we discuss what aspects might be considered when solving other real enumerative problems.
\end{abstract}
\maketitle
 \tableofcontents

\section{Introduction}
In this paper we study a 2-parameter family of enumerative problems over the reals:

\emph{Given four $b$-dimensional subspaces in generic position in a $2b$-dimesional space what is the number of $2a$-dimensional subspaces having $a$-dimensional intersection with all the given subspaces?}

These are so called \emph{Schubert problems} which can be solved by Schubert calculus over the complex field. In fact there is a more elegant way to calculate, see \cite{vakil:sch-induction}. In general the answer to an enumerative problem over the complex field is a number. Over the reals, however the subvariety of nongeneric configurations is a real hypersurface with possibly non-connected complement. Consequently the number of solutions depend on the \emph{chamber} (connected component) of the space of configurations, so instead of a number we have a \emph{solution function}, an integer-valued function on the chambers. The minimal information we want to know is the range of the solution function. These solution functions recently were studied intensively see e.g. \cite{finashin_abundance_2012}, \cite{okonek-teleman}, \cite{sottile_frontiers_2010}, \cite{sottile4}. The case of $a=2$ was calculated in \cite[Thm 8.1]{sopsot}

Our solution is fairly elementary, in fact we adopt the method of R. Vakil in \cite{vakil:sch-induction} from the complex case to the real (We learned the idea from Tam\' as Terpai, and only later found the paper of R. Vakil). We present the method in Section \ref{sec:number}.

In Section \ref{sec:schubert} we explain in what extent Schubert calculus can be extended to the real case: The cohomology calculation gives us the \emph{signed sum of the solutions}, therefore in general it only gives a lower bound of the solution function. It turns out that in our cases this lower bound is sharp.

Section \ref{sec:sign} is devoted to the determination of the sign of a solution. The sign decides whether 2 orientations of a certain vector space coincide so we have to calculate the sign of the determinant of the  change of basis. It turns out that this determinant is a resultant, similarly to the case of lines on a cubic surface studied in \cite{okonek-teleman} and \cite{finashin_abundance_2012}.

The resultant leads to a combinatorial description of the sign which is presented in Section \ref{sec:mod4}. In this section we also show that for $a$ and $b$ even, the number of solutions is constant modulo 4. This phenomenon was shown in \cite{sottile4} and \cite{heinhillarsottile} for a class similar problems. They study so called osculating solutions. It can be considered as an other ``real form'' for the complex Schubert problems, so we can expect similar behavior. The modulo 4 property also holds for the problem of lines on a cubic surface. This property was studied in \cite{benedetti_spin_1995}. It would be interesting to find a unified explanation. We also show that if $a$ and $b$ are both divisible by $2^k$, then the number of solutions is constant modulo $2^{k+1}$.

In Section \ref{sec:chambers} we list the chambers of the generic configurations.

In Section \ref{sec:quiver} we explain the translation to the language of quivers.

In Section \ref{sec:cross} we explain the geometric meaning of the sign of a solution in terms of cross ratios. We shortly mention the case of unordered four-tuples of subspaces.

In Section \ref{sec:what} we speculate on what could be considered as a solution to a real enumerative problem and review the problem of lines on a cubic surface.

We would like to thank Tam\'as Terpai, whose explanation initiated the paper, Rich\'ard Rim\'anyi for many conversations on the topic, Frank Sottile for important comments on the first version of the manuscript and M\'aty\'as Domokos for educating us on quivers. While finishing the manuscript the first author was enjoying the hospitality of the Alfr\'ed R\'enyi Institute.

\section{The enumerative question: The number of balanced subspaces}\label{sec:number}
In this paper we only consider real and complex vector spaces. If not stated otherwise, we allow both cases.
\begin{definition} \label{de:generic} Suppose that four subspaces $V_1,\dots,V_4$  of dimension $b$ of the vector space $E$ of dimension $2b$ are given. We call a $2a$-dimensional subspace $W$ of $E$ \emph{balanced} relative to the subspaces $V_i$, if its intersection with the subspaces $V_i$ has dimension $a$ for all $i$. When the subspaces $V_i$ are fixed in advance, we simply say that $W$ is balanced.
\end{definition}
The classical problem of determining the number of lines intersecting given four lines can be rephrased as finding the number of balanced planes when the subspaces $V_i$ are two-dimensional planes in a four-dimensional vector space. Our aim is to solve this problem in the general case, in other words we want to determine the number of balanced subspaces in arbitrary dimensions.

To calculate the number of balanced subspaces we use a graph construction. We learned the idea from Tam\'as Terpai. Later we learned that this construction was used in the so-called four subspace problem before. The earliest occurrence we found is \cite{turnbull_projective_1942}. R.\ Vakil in \cite{vakil:sch-induction} sketches the construction more along the lines we do. For $2a=b$ he attributes the construction to H. Derksen.
\subsection{The graph construction}\label{sec:graph}
\begin{definition}
Let $A,B,C$ be subspaces of a real or complex vector space $V$ such that both $(A,B)$ and $(B,C)$ are complementary subspaces (in the algebraic sense): $A\cap B=B\cap C=\{0\}$ and $A+B=C+B=V$. By the direct sum property, for all $a\in A$ there is a unique $b\in B$ such that $a+b\in C$. This implies that there is a unique map
  \[\gamma=\gamma^{\scriptscriptstyle C\subset V}_{\scriptscriptstyle A\to B}:A\to B,\]
such that $\gamma(a)+a\in C$ for all $a\in A$.
\end{definition}
Since the subspaces are linear, it follows that $\gamma$ is linear, and its graph in $A+B=V$ is the subspace $C$. Another way to construct $\gamma$ is using the projections $\pi_A,\pi_B$ defined by the direct decomposition $V=A\oplus B$. By our assumption $\pi_A|_C$ is invertible and $\gamma=\pi_B(\pi_A|_C)^{-1}$. Notice that if additionally $A\cap  C=\{0\}$, then $\gamma^{\scriptscriptstyle C\subset V}_{\scriptscriptstyle B\to A}$ is also defined and equal to the inverse of $\gamma^{\scriptscriptstyle C\subset V}_{\scriptscriptstyle A\to B}$.\\

Returning to our problem, assume now that the subspaces $V_i$ are pairwise transversal. Then we can define the maps
\[\gamma_i:=\gamma^{\scriptscriptstyle V_i\subset E}_{\scriptscriptstyle V_1\to V_2}  \]
for $i=3$ and $4$.
\begin{proposition}\label{gamma3-4}If a  subspace $W\subset E$ is balanced then
\[\gamma_3(V_1\cap W)=\gamma_4(V_1\cap W)=V_2\cap W.\]
\end{proposition}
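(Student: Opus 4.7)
The plan is to reduce everything to a clean linear-algebra observation: that whenever $W$ is balanced and $i\neq j$, the intersections $W\cap V_i$ and $W\cap V_j$ are $a$-dimensional subspaces of $W$ whose only common vector is $0$ (because $V_i\cap V_j=\{0\}$ by generic position), so a dimension count gives $W=(W\cap V_i)\oplus(W\cap V_j)$. Moreover, this internal decomposition of $W$ is the restriction of the external decomposition $E=V_i\oplus V_j$, so the projections $\pi_{V_i},\pi_{V_j}$ coming from $E=V_i\oplus V_j$ send $W$ into $W\cap V_i$ and $W\cap V_j$ respectively.

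With this in hand, I would apply the observation twice, once with $(i,j)=(1,2)$ and once with $(i,j)=(1,3)$. Combining the two, I claim that the restriction $\pi_{V_1}|_{W\cap V_3}\colon W\cap V_3\to W\cap V_1$ is a linear isomorphism. Well-definedness (landing in $W\cap V_1$) comes from the $(1,2)$-decomposition of $W$; injectivity is automatic since $\ker\pi_{V_1}=V_2$ and $V_2\cap V_3=\{0\}$; surjectivity then follows since both sides have dimension $a$.

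Now pick any $v_1\in W\cap V_1$. Let $v_3\in W\cap V_3$ be its (unique) preimage under the isomorphism above, and write $v_3=v_1+v_2$ with $v_2=\pi_{V_2}(v_3)\in W\cap V_2$ (again by the $(1,2)$-decomposition of $W$). Then $v_1+v_2=v_3\in V_3$, so by the uniqueness in the definition of $\gamma_3$, we have $\gamma_3(v_1)=v_2\in W\cap V_2$. Thus $\gamma_3(W\cap V_1)\subseteq W\cap V_2$, and since $\gamma_3\colon V_1\to V_2$ is an isomorphism (the graph $V_3$ being transverse to both $V_1$ and $V_2$) and $\dim(W\cap V_1)=\dim(W\cap V_2)=a$, the inclusion is an equality: $\gamma_3(W\cap V_1)=W\cap V_2$. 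The identical argument with $V_4$ in place of $V_3$ gives $\gamma_4(W\cap V_1)=W\cap V_2$.

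There is no real obstacle here beyond bookkeeping; the one place I would be slightly careful is checking that the internal and external direct-sum decompositions of $W$ really coincide, since this is what allows me to conclude that $\pi_{V_1}$ and $\pi_{V_2}$ (which are a priori defined only on $E$) preserve $W$ and its intersection pieces. Once that is noted, the rest is a short transit through uniqueness of the graph-map $\gamma_3$.
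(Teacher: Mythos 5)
Your proof is correct, and it is essentially the paper's argument with the details filled in: the paper's proof consists of the single claim that $\gamma_i|_{W\cap V_1}$ coincides with the graph map $\gamma^{W_i\subset W}_{W_1\to W_2}$ constructed inside $W$ (where $W_j:=V_j\cap W$), followed by invertibility; your direct-sum decomposition $W=(W\cap V_1)\oplus(W\cap V_2)$ and the compatibility of internal and external projections are precisely what makes that identification hold. The one point the paper relegates to ``it is easy to see'' is exactly the point you flag as needing care, so your write-up is a faithful expansion rather than a different route.
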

\begin{proof} Let us use the notation $W_i:=V_i\cap W$ for $i=1,2,3,4$. Then it is easy to see that
\[\gamma_i|_{W_1}=\gamma^{\scriptscriptstyle W_i\subset W}_{\scriptscriptstyle W_1\to W_2}  \]
for $i=3,4$. Therefore $\gamma_i(W_1)\subset  W_2$. By our previous remark  $\gamma^{\scriptscriptstyle W_i\subset W}_{\scriptscriptstyle W_1\to W_2}$ is invertible, so $\gamma_i(W_1)=W_2$.
\end{proof}
We call $\varphi:={\gamma_3}^{-1}\gamma_4$ the map corresponding to the configuration $V_1,V_2,V_3,V_4$.
\begin{corollary} \label{bijection}The map $W\mapsto W\cap V_1$ is a bijection between the balanced subspaces of $E$ and the $a$-dimensional invariant subspaces of the corresponding map $\varphi$.
\end{corollary}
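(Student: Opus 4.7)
The plan is to verify well-definedness of $W\mapsto W\cap V_1$ and then construct an explicit inverse, using Proposition \ref{gamma3-4} and the graph picture of the $\gamma_i$.

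First, well-definedness. If $W$ is balanced, then by definition $W_1:=W\cap V_1$ has dimension $a$. Proposition \ref{gamma3-4} gives $\gamma_3(W_1)=\gamma_4(W_1)=W_2$, and because $\gamma_3$ restricted to $W_1$ is a bijection $W_1\to W_2$ (it is the graph map for $W_3\subset W_1\oplus W_2$), we have $\varphi(W_1)=\gamma_3^{-1}\gamma_4(W_1)=\gamma_3^{-1}(W_2)=W_1$. So $W_1$ is an $a$-dimensional $\varphi$-invariant subspace of $V_1$.

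Next, I would write down the inverse. Given an $a$-dimensional $\varphi$-invariant $U\subset V_1$, define $U':=\gamma_3(U)$; since $\gamma_4=\gamma_3\varphi$ and $\varphi(U)=U$, we also have $U'=\gamma_4(U)$. Set $W(U):=U+U'$. Because $V_1\cap V_2=0$, this sum is direct and $\dim W(U)=2a$. The key checks are that $W(U)$ is balanced and that $W(U)\cap V_1=U$. The intersections with $V_1$ and $V_2$ are easy: if $u+u'\in V_1$ with $u\in U$ and $u'\in U'\subset V_2$, then $u'\in V_1\cap V_2=0$, so $W(U)\cap V_1=U$, and symmetrically $W(U)\cap V_2=U'$. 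For $V_3$, the graph $\{u+\gamma_3(u):u\in U\}$ is an $a$-dimensional subspace contained both in $V_3$ (because the graph of $\gamma_3$ in $V_1\oplus V_2$ is $V_3$) and in $W(U)$, giving $\dim(W(U)\cap V_3)\geq a$. The reverse inequality follows from $V_1\cap V_3=0$: the subspaces $W(U)\cap V_3$ and $W(U)\cap V_1=U$ meet only in $0$, so their dimensions sum to at most $\dim W(U)=2a$, forcing $\dim(W(U)\cap V_3)=a$. The same argument with $\gamma_4$ handles $V_4$. Thus $W(U)$ is balanced.

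Finally, I would check that the two constructions are mutually inverse. Starting from balanced $W$, the reconstructed subspace is $W_1+\gamma_3(W_1)=W_1+W_2$; but $W_1,W_2$ are transverse (since $V_1\cap V_2=0$) $a$-dimensional subspaces of the $2a$-dimensional $W$, so $W_1+W_2=W$. Starting from an invariant $U$, the forward map recovers $W(U)\cap V_1=U$ as shown above. The only step where one has to be a bit careful is verifying that the intersections with $V_3$ and $V_4$ have the correct dimension, and this is exactly where pairwise transversality of the $V_i$ is used; everything else is the graph-of-$\gamma_i$ dictionary already set up.
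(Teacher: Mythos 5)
Your proof is correct and follows the same route as the paper: the forward direction via Proposition \ref{gamma3-4}, and the inverse given by $U\mapsto U+\gamma_3(U)$, after which one checks the four intersections. You simply supply the dimension counts for $W(U)\cap V_3$ and $W(U)\cap V_4$ that the paper leaves implicit.
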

\begin{proof}By Proposition \ref{gamma3-4}.,\ if $W$ is a $2a$-dimensional balanced  subspace then $W_1$ is an $a$-dimensional invariant subspace of $\varphi$. On the other hand if $W_1$ is an $a$-dimensional invariant subspace of $\varphi:={\gamma_4}^{-1}\gamma_3$, then $W:=W_1+ \gamma_3(W_1)$ is balanced, since
\[ V_1\cap W=W_1,\ V_2\cap W=\gamma_3(W_1),\ V_3\cap W=\{w+\gamma_3(w):w\in W_1\},\ V_4\cap W=\{w+\gamma_4(w):w\in W_1\}.\]
\end{proof}
\begin{remark}\label{rem:distinct} Since $\varphi $ is invertible, all the eigenvalues are non-zero. Similarly, since $V_3\cap V_4=\{0\}$, 1 is not an eigenvalue of $\varphi $.
\end{remark}
We are only interested in \emph{generic} configurations. A subset of the configuration space is a good choice for genericity, if it is open and the solution function is locally constant. Ideally we choose the maximal such subset. We will see later in Remark \ref{rem:generic}. that the following definition is good in  this maximal sense.

\begin{definition}  Four subspaces $V_1,\dots,V_4$  of dimension $b$ of the vector space $E$ of dimension $2b$ are in \emph{general position} (or \emph{generic}) if the corresponding map $\varphi:V_1\to V_1$ has $b$ distinct eigenvalues (over $\C$), none of them is equal to 0 or 1.
\end{definition}
For the complex case Corollary \ref{bijection}. immediately implies that
\begin{theorem}(R. Vakil \cite{vakil:sch-induction}) \label{c-solutions} Let the subspaces $V_1,\dots,V_4$  of dimension $b$ of the complex vector space $E$ of dimension $2b$ be in general position. Then the number of $2a$-dimensional balanced  subspaces is
\[N_{\C}(b,a)=\binom{b}{a}.\]
\end{theorem}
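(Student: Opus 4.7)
The plan is to combine Corollary \ref{bijection} with a standard fact from linear algebra about invariant subspaces of diagonalizable maps with simple spectrum.

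First, I would invoke Corollary \ref{bijection} to reduce the problem to counting $a$-dimensional $\varphi$-invariant subspaces of $V_1$, where $\varphi = \gamma_3^{-1}\gamma_4$ is the map associated to the configuration. This bijection is already established, so the enumerative question becomes purely a question about one endomorphism $\varphi: V_1 \to V_1$.

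Next, I would use the genericity hypothesis: $\varphi$ has $b$ distinct eigenvalues over $\C$. Consequently, $\varphi$ is diagonalizable, and $V_1$ splits as a direct sum
\[ V_1 = L_1 \oplus L_2 \oplus \dotsb \oplus L_b \]
of one-dimensional eigenspaces. The key lemma is that every $\varphi$-invariant subspace $U \subset V_1$ must be a sum of some subset of the $L_i$. The short argument: if $u \in U$ decomposes as $u = u_1 + \dotsb + u_b$ with $u_i \in L_i$, then applying powers of $\varphi$ gives $\varphi^k u = \sum \lambda_i^k u_i \in U$ for all $k$; the Vandermonde determinant in the distinct eigenvalues $\lambda_i$ is nonzero, so each $u_i$ lies in $U$. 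Hence $U$ is spanned by those $L_i$ it meets nontrivially.

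Finally, the $a$-dimensional $\varphi$-invariant subspaces are therefore in bijection with the $a$-element subsets of $\{L_1,\dotsc,L_b\}$, of which there are $\binom{b}{a}$. Combining with the bijection of Corollary \ref{bijection} yields $N_\C(b,a) = \binom{b}{a}$.

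There is no serious obstacle here — the whole content has been absorbed into the graph construction and the definition of genericity. The only thing worth double-checking is that the count gives each balanced $W$ exactly once, but this is already guaranteed by Corollary \ref{bijection}, so the proof is essentially a two-line consequence of the preceding setup.
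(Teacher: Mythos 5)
Your proof is correct and matches the paper's approach: the paper states that Theorem \ref{c-solutions} follows immediately from Corollary \ref{bijection}, leaving implicit the standard fact that an operator with $b$ distinct eigenvalues has exactly $\binom{b}{a}$ invariant subspaces of dimension $a$, which you have simply made explicit via the Vandermonde argument.
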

For the real case notice that the eigenvalues of $\varphi$ are either real or complex conjugate pairs. So the corresponding minimal invariant subspaces of $\varphi$ are one or two dimensional, respectively. Denoting the number of complex conjugate pairs by $c$ we get:
\begin{theorem} \label{r-solutions} Let the subspaces $V_1,\dots,V_4$  of dimension $b$ of the real vector space $E$ of dimension $2b$ be in general position. If the corresponding map $\varphi:V_1\to V_1$ has $c$ complex conjugate pairs among its eigenvalues, then the number of $2a$-dimensional balanced  subspaces is
\begin{equation}\label{eq:real-sols}
N_{\R}(b,a,c)=\sum\limits_{i=0}^c\binom{c}{i}\binom{b-2c}{a-2i}.
\end{equation}
\end{theorem}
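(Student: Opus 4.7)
The plan is to use Corollary \ref{bijection} to reduce the counting of balanced subspaces to the counting of $a$-dimensional $\varphi$-invariant subspaces of $V_1$, and then to classify the latter using the fact that $\varphi$ has $b$ distinct eigenvalues over $\C$.

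First I would analyze the eigenstructure of $\varphi$ over $\R$. Since all eigenvalues are distinct, $\varphi$ is diagonalizable over $\C$, and its eigenvalues split into $b-2c$ real ones $\lambda_1,\dots,\lambda_{b-2c}$ and $c$ complex conjugate pairs $\{\mu_j,\bar\mu_j\}$ with $\mu_j\notin\R$. For each real eigenvalue $\lambda_k$ one has a 1-dimensional real eigenline $L_k\subset V_1$, and for each conjugate pair $\{\mu_j,\bar\mu_j\}$ one has a 2-dimensional real $\varphi$-invariant subspace $P_j\subset V_1$, obtained as the real span of the real and imaginary parts of a complex eigenvector for $\mu_j$ (equivalently, as the real points of the sum of the two 1-dimensional complex eigenspaces). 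These $b-2c$ lines and $c$ planes are the minimal real $\varphi$-invariant subspaces, and they span $V_1$ as a direct sum.

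Next I would argue that every real $\varphi$-invariant subspace $U\subseteq V_1$ is a direct sum of some of these minimal blocks. Because $\varphi$ has distinct eigenvalues, its $\R$-algebra of polynomials is semisimple, so the restriction $\varphi|_U$ is again diagonalizable over $\C$ with eigenvalues among those of $\varphi$. Hence $U\otimes\C$ is a sum of complex eigenlines of $\varphi$, and being defined over $\R$ it must be closed under complex conjugation; this forces $U$ to contain $L_k$ for each $\lambda_k$ occurring and $P_j$ for each conjugate pair occurring.

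Finally I would count. An $a$-dimensional real $\varphi$-invariant subspace of $V_1$ is determined by a choice of $i$ planes $P_j$ (contributing $2i$ to the dimension) and $a-2i$ lines $L_k$, so the total count is
\[
N_{\R}(b,a,c)=\sum_{i=0}^{c}\binom{c}{i}\binom{b-2c}{a-2i},
\]
with the convention that the binomial vanishes when $a-2i$ is negative or exceeds $b-2c$. The only nontrivial step is the semisimplicity argument identifying every invariant subspace with a sum of minimal ones; once that is established, the counting is purely combinatorial.
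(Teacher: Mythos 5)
Your proposal is correct and follows essentially the same route as the paper: reduce via Corollary \ref{bijection} to counting $a$-dimensional real $\varphi$-invariant subspaces, observe that minimal invariant subspaces are 1-dimensional (real eigenvalues) or 2-dimensional (conjugate pairs), and count block choices. The paper states this more tersely; your semisimplicity argument just makes explicit the step that every invariant subspace is a direct sum of the minimal blocks, which the paper treats as immediate.
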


\begin{remark} \label{reverse}Notice that we can construct four-tuples of spaces with a given $\varphi$: Let  $\varphi:F\to F$ be a linear map:
 \[ E=F\oplus F,\ V_1=F\oplus 0,\  V_2=0\oplus F, \ V_3=\{\big(f,f\big):f\in F\},\ V_4=\{\big(f,\varphi(f)\big):f\in F\}. \]
\end{remark}
As far as we know this is the only non-trivial infinite family of real enumerative problems where the values of the solution functions are known.

\begin{remark}\label{rem:sot} This can also be considered as the following problem: In how many ways can one factor a degree $b$ polynomial $\chi$ with real coefficients into two polynomials $\chi_C$ and $\chi_D$ with real coefficients of degree $a$ and $b-a$ respectively? (See discussion following Theorem \ref{thm:sign-alg-all}.) Such a factorization is a special case of the paper \cite{sopsot} of E.\ Soprunova and F.\ Sottile, where they examine the number of such factorizations, possibly to multiple polynomials. As they point out, the general case can be calculated by generating functions. The number of such factorizations in the currently examined case is obtained as the coefficient of $x_1^ax_2^{b-a}$ in the polynomial $(x_1+x_2)^{b-2c}(x_1^2+x_2^2)^c$.
\end{remark}

\begin{remark}\label{bounds} Since all real solutions are also complex, the number of complex solutions is a theoretical upper bound. For some real enumerative problems this cannot be obtained, like the number of flexes of a smooth plane curve \cite{klein}. In our case the maximum of $N_{\R}(b,a,c)$ is obtained for $c=0$, i.e. when all eigenvalues are real. $N_{\R}(b,a,0)=N_{\C}(b,a)$, i.e. the theoretical upper bound can be obtained. This also follows from a result of R. Vakil \cite[Prop. 1.3]{vakil:sch-induction} stating that all Schubert problems are enumerative over $\R$. It is also an immediate consequence of the Mukhin-Tarasov-Varchenko theorem in \cite{mukhin-tarasov-varchenko:shapiro} (earlier called Shapiro-Shapiro conjecture).

  The minimum is obtained when $c$ is maximal: For $u=\lfloor a/2\rfloor$ and $v=\lfloor b/2\rfloor=c$ the number of solutions is
\[ \twocases{N_{\R}(b,a,v)=}{\ 0}{\text{\rm $a$ odd and $b$ even}}
{\ds\binom{v}{u}}{\text{otherwise.}}{}  \]
Notice that for large $b$ there are large gaps among the possible values of the solution function  $f(c)=N_{\R}(b,a,c)$.
\end{remark}
\begin{examples} The example of $a=2,\ b=4$ is calculated in \cite[p.16]{garcia-puente_secant_2012}, where for the values $c=0,1,2$ we have $6,2,2$ solutions, respectively. In the next table we list the solution functions for $b=8$ and $a<8$. Notice the (obvious) symmetry.

\begin{table}\setlength{\tabcolsep}{20pt}

\begin{tabular}{|c|ccccc|}
\hline
$a\backslash c$	&	0 	&	1	&	2	&	3	&	4\\
\hline
1	&	8	&	6	&	4	&	2	&	0\\
2	&	28	&	16	&	8	&	4	&	4\\
3	&	56	&	26	&	12	&	6	&	0\\
4	&	70	&	30	&	14	&	6	&	6\\
5	&	56	&	26	&	12	&	6	&	0\\
6	&	28	&	16	&	8	&	4	&	4\\
7	&	8	&	6	&	4	&	2	&	0\\
\hline
\end{tabular}
\bigskip
\caption{The solution functions $N_{\R}(8,a,c)$.}
\end{table}
\end{examples}
\section{Schubert calculus} \label{sec:schubert}Results of this section will not be used in the rest of the paper. On the other hand we explain our motivation to assign signs to balanced subspaces.

The traditional method to solve enumerative problems over the complex numbers is Schubert calculus. Consider the subvariety
\[\sigma_V:=\{W\in\gr_{2a}(E):\dim(V\cap W)\geq a\}  \]
of the complex Grassmannian manifold $\gr_{2a}(E)$, where $V$ is a $b$ dimensional subspace of a $2b$-dimensional complex vector space $E$. Then assuming that the $b$-dimensional subspaces $V_1,\dots,V_4$ are generic, the number of $2a$-dimensional balanced  subspaces is the number of points of the 0-dimensional subvariety
\[\bigcap_{i=1}^4\sigma_{V_i}.\]
Using the fact that complex manifolds are canonically oriented, we can use cohomology:
\[  N_{\C}(b,a)=\int\limits_{\gr_{2a}(E)}[\sigma_V]^4, \]
where $[\sigma_V]\in H^*(\gr_{2a}(E),\Z)$ is the integer cohomology class represented by the variety $\sigma_V$. If we choose a complete flag $F_\bullet$ in $E$, such that $F_b=V$, then we can identify $\sigma_V$ with the Schubert variety $\sigma_\lambda=\sigma_\lambda(F_\bullet)$, where $\lambda$ is the partition with $\lambda_j=b-a$ for $j=1,\dots,a$. Now we can use the usual machinery of Schubert calculus. We warn the interested readers that the calculation is surprisingly complicated. However it is easy using puzzles or checkers (see \cite{Knutson-Tao-Woodward} and \cite{vakil:lr}).

Things get more complicated over the real number field as real subvarieties do not always represent  integer cohomology classes. One possibility is to use mod 2 cohomology. By the theorem of Borel and Haefliger all the calculations are the same, but in the end we only calculated the number of solutions modulo 2.

\begin{figure}
\begin{picture}(50,50)(2,2)
\thicklines
\put(0,50){\line(0,-1){40}}
\put(10,50){\line(0,-1){40}}
\put(20,50){\line(0,-1){40}}
\put(30,50){\line(0,-1){40}}
\put(40,50){\line(0,-1){40}}
\put(50,50){\line(0,-1){40}}
\put(0,50){\line(1,0){50}}
\put(0,40){\line(1,0){50}}
\put(0,30){\line(1,0){50}}
\put(0,20){\line(1,0){50}}
\put(0,10){\line(1,0){50}}

\put(53,50){\small $4$}
\put(-15,25){\normalsize $a$\Large$\bigg\lbrace$}
\put(0,10){\large$\underbrace{\,\qquad \,\quad \,}_{ b-a}$}
\end{picture}

\caption{The Young diagram corresponding to the enumerative problem}
\end{figure}
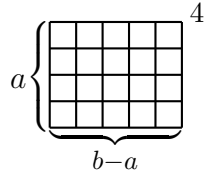

Assuming that $a$ and $b$ are even, by a result of Ehresmann \cite{ehresmann} the subvariety $\sigma_V$ will represent a cohomology class in $H^*(\gr_{2a}(E),\Z)$. We also have a limited version of the Schubert calculus (\cite{realschubert}) for rational cohomology which says that we have a (degree doubling) ring isomorphism
\[d:H^*(\gr_{u}(\C^{v}),\Q)   \to  H^*(\gr_{2u}(\R^{2v}),\Q),\]
such that $d([\sigma_\lambda])=[\sigma_{d\lambda}]$, where $d\lambda$ is the "double" of the partition $\lambda=(\lambda_1,\lambda_2\dots,\lambda_k)$:
\[ d\lambda=(2\lambda_1,2\lambda_1,2\lambda_2,2\lambda_2,\dots,2\lambda_k,2\lambda_k).  \]

Using the notation $a=2u,\ b=2v$ and $\lambda$ for the partition with $\lambda_j=v-u$ for $j=1,\dots,u$.
\[\int\limits_{\gr_{2a}(E)}[\sigma_V]^4=\int\limits_{\gr_{a}(\C^b)}[\sigma_\lambda]^4=N_{\C}(v,u)=\binom{v}{u}. \]
This number is only a lower bound for the number of balanced spaces, since in the real case we can have different orientations. Hence $\binom{v}{u}$ is actually the signed sum of the solutions. Looking at (\ref{eq:real-sols}) we can see that the lower bound is obtained when all eigenvalues of $\varphi$ come in complex conjugate pairs.

Similar interpretations can probably be given for the cases when $a$ or $b$ are odd using twisted cohomology (in the spirit of \cite{okonek-teleman}).

In the next section we explain how to determine the sign of a balanced subspace for the case when $a$ and $b$ are even. Concerning the other cases, see Remark \ref{rem:odd}.
\section{The sign of a balanced subspace}\label{sec:sign}
Following the cohomological interpretation of the previous section we fix orientations of the Grassmannian $\gr=\gr_{2a}(E)$ and the normal spaces of the (smooth part of the) subvarieties $\sigma_{V_i}$. If these subvarieties intersect transversally at a point $W$, then there are two possible orientations of the tangent space $T_W\gr$ (or, to be fancy, the normal space $N_W \{W\}\subset\gr$). The sign of $W$ is positive if the orientation of $T_W\gr$ agrees with  the orientation of $\bigoplus_{i=1}^4N_W\sigma_{V_i}$ under the canonical isomorphism of the two spaces, and negative otherwise.

Changing one of these orientations the sign changes as well, however we will show that there is a canonical choice for all orientations and that the sign has a (somewhat) geometric meaning.
\subsection{Calculating the normal spaces}
Again, fix a $2b$-dimensional vector space $E$, a $2a$-dimensional subspace $V\subset E$ and an $a$-dimensional subspace $W\subset V$. To calculate the normal spaces, we recall the following well known fact (see \cite{Kirillov} Corollary 2.19, page 13):

\begin{lemma}
\label{LemLie}
Let $G$ be a Lie group acting on the manifold $M$, and denote the orbit of $m\in M$ by $\mathcal{O}_m:=G\cdot m$. Then the tangent space $T_m\mathcal{O}_m$ is isomorphic to $T_e G/T_e H$, where $H=\Stab_G(m)$ is the stabilizer subgroup of $m$.
\end{lemma}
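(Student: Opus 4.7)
The plan is to realize $T_m\mathcal{O}_m$ as the image of the differential of the orbit map at the identity, and then identify its kernel as $T_eH$, so that the first isomorphism theorem yields the claim.

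First I would introduce the orbit map $\mu_m \colon G \to M$ defined by $\mu_m(g) = g \cdot m$, whose image is the orbit $\mathcal{O}_m$. The differential $(d\mu_m)_e \colon T_eG \to T_mM$ is the key object. By a curve argument, every element of $T_m\mathcal{O}_m$ is realized as $\tfrac{d}{dt}\big|_{t=0} \gamma(t)\cdot m$ for some smooth curve $\gamma$ in $G$ with $\gamma(0)=e$, so the image of $(d\mu_m)_e$ equals $T_m\mathcal{O}_m$.

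Next I would compute the kernel. If $X \in T_eH$, pick a curve $\gamma$ in $H$ representing it; then $\mu_m \circ \gamma$ is constant at $m$, so $(d\mu_m)_e(X) = 0$. Conversely, since $H$ is a closed Lie subgroup (the stabilizer of a point is always closed, and a closed subgroup of a Lie group is a Lie subgroup by Cartan's theorem), the homogeneous space $G/H$ carries a unique manifold structure for which the projection $\pi \colon G \to G/H$ is a submersion with kernel $\ker(d\pi)_e = T_eH$. The orbit map factors as $\mu_m = \bar{\mu}_m \circ \pi$ where $\bar{\mu}_m \colon G/H \to M$ is an injective immersion onto $\mathcal{O}_m$. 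Hence $\ker(d\mu_m)_e = \ker(d\pi)_e = T_eH$.

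Combining, the first isomorphism theorem for vector spaces gives $T_m\mathcal{O}_m = \mathrm{im}(d\mu_m)_e \cong T_eG/\ker(d\mu_m)_e = T_eG/T_eH$. The main delicate point is the factorization through $G/H$, which requires that $H$ is a closed (hence embedded Lie) subgroup and that $\bar{\mu}_m$ is an immersion; once this is granted, the kernel computation and the isomorphism are straightforward. Since the lemma is quoted from Kirillov, I would likely just refer to the cited source for these standard facts and highlight the image/kernel identification as the content of the argument.
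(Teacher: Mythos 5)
The paper does not supply its own proof of this lemma; it simply cites \cite{Kirillov} (Corollary 2.19, p.~13) and treats it as a known fact, exactly as you anticipate at the end of your proposal. Your argument is a correct and standard proof: the orbit map $\mu_m(g)=g\cdot m$ has image $\mathcal{O}_m$, the kernel of $(d\mu_m)_e$ is $T_eH$ (with the containment $\ker(d\mu_m)_e\subset T_eH$ coming from the factorization through the homogeneous space $G/H$, using Cartan's closed subgroup theorem and the fact that $\bar\mu_m\colon G/H\to M$ is an immersion), and the isomorphism then follows from the first isomorphism theorem for linear maps. Since the paper offers only a citation, there is nothing to compare beyond noting that your instinct to defer to the cited source is precisely what the authors do.
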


We also need the following lemma from linear algebra:
\begin{lemma}
\label{LemAlg}
Let $C$ and $D$ be finite-dimensional vector spaces, and  $C'\subset C$, $D'\subset D$ subspaces. Using the notation $A:=\Hom(C,D)$ and $A':=\left\{\psi\in A:\psi(C')\subset D'\right\}$, the relation
$$A/A'\cong \Hom(C', D/D')$$
holds.
\end{lemma}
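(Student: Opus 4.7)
The plan is to exhibit an explicit surjective linear map $\Phi : A \to \Hom(C', D/D')$ whose kernel is precisely $A'$, so that the first isomorphism theorem yields the desired identification.

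First I would define $\Phi$ by $\Phi(\psi) = \pi \circ \psi|_{C'}$, where $\pi : D \to D/D'$ denotes the canonical quotient projection. This is manifestly linear in $\psi$. Computing the kernel is immediate: $\Phi(\psi) = 0$ if and only if $\pi(\psi(c')) = 0$ for every $c' \in C'$, which is to say $\psi(c') \in D'$ for every $c' \in C'$; by definition this is exactly the condition $\psi \in A'$. Hence $\ker \Phi = A'$.

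The remaining step is to verify surjectivity of $\Phi$, and this is the only place where finite-dimensionality (or at least some splitting hypothesis) enters. Given any $\bar f \in \Hom(C', D/D')$, I would (i) choose a linear complement $C''$ of $C'$ in $C$, so that $C = C' \oplus C''$, and (ii) pick a set-theoretic lift along $\pi$ of $\bar f$ to a linear map $f : C' \to D$; the latter is built by fixing a basis of $C'$, selecting any preimage in $D$ of the image under $\bar f$ of each basis vector, and extending linearly. Declaring $\psi$ to equal $f$ on $C'$ and $0$ on $C''$ gives an element of $A$ with $\Phi(\psi) = \bar f$.

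I do not expect any serious obstacle; the only subtle point is being careful that the lift $f$ is genuinely linear (so the basis-by-basis construction above, rather than an arbitrary section of $\pi$, is what one wants), and that one has a complement $C''$ in order to extend $f$ to all of $C$. Both are automatic over a field. Applying the first isomorphism theorem to $\Phi$ then produces the canonical isomorphism $A/A' \cong \Hom(C', D/D')$.
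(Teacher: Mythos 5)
Your proof is correct and follows essentially the same route as the paper's: define the map $\psi \mapsto \pi\circ\psi\circ\iota$ (your $\Phi$ is the paper's $q$), identify the kernel as $A'$, and apply the first isomorphism theorem. The only difference is that you spell out the surjectivity argument (choosing a complement of $C'$ and a linear lift of $\bar f$), which the paper simply asserts as clear.
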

\begin{proof}
Let $\pi:D\to D/D'$ be the canonical quotient map and $\iota: C'\to C$ the inclusion. Define $q:\Hom(C,D)\rightarrow \Hom(C',C/D')$ by \begin{equation}\label{de:q}
q(\psi):=\pi\circ \psi\circ\iota.
\end{equation}
Then clearly $q$ is surjective and $\ker(q)=A'$. By applying the classical isomorphism theorem, this gives
\begin{equation}
\label{isom}
A/\ker(q)\cong q(A)=\Hom(C', D/D'),
\end{equation}
which finishes the proof.
\end{proof}
For $\GL(E)$ acting on $\gr_{2a}(E)$ the lemmas imply the well known fact that $T_W\Gr_{2a}(E)\cong \Hom(W,E/W)$.

Now to calculate the normal space $N_W\sigma_{V}$ notice first that $\sigma_V$ is the closure of the orbit $G'\cdot W$, where
\[  G'=\{\psi\in \GL(E): \psi(V)=V\}.  \]
The stabilizer of $W$ in $\GL(E)$ is
\[  H=\{\psi\in \GL(E): \psi(W)=W\}  \]
so by the isomorphism theorems we obtain:
\[T_W \sigma_V\iso T_eG'/T_e(H\cap G')\iso (T_eG'+T_eH)/ T_eH,\]
\[N_W\sigma_{V}\iso (T_eGL(E)/T_eH)/((T_eG'+T_eH)/ T_eH),\]
and
\[N_W\sigma_{V}\iso\Hom(E,E)/(T_eG'+T_eH).\]
Clearly
\[\begin{split}
 T_eG'+T_eH&= \{\psi\in \Hom(E,E): \psi(V)\subset V\}+\{\psi\in \Hom(E,E): \psi(W)\subset W\} \\
      &=\{\psi\in \Hom(E,E): \psi(W\cap V)\subset \langle W, V\rangle\},
    \end{split}  \]
 and using Lemma \ref{LemAlg}. with $C,D=E$, $C'=W\cap V$ and $D'=\langle W, V\rangle$ we obtain:
\begin{proposition}
\begin{equation*}
\label{NWSV}
N_W\sigma_{V}\cong\Hom\big(W\cap V, E/\langle W, V\rangle\big).
\end{equation*}
\end{proposition}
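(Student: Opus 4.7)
The plan is to deduce the proposition by a direct application of Lemma \ref{LemAlg}, since all the preparatory work is already in place. The calculation preceding the statement yields
\[ N_W\sigma_V \cong \Hom(E,E)/(T_e G' + T_e H), \]
via Lemma \ref{LemLie} applied to the $\GL(E)$-action on $\Gr_{2a}(E)$ and to the $G'$-orbit whose closure is $\sigma_V$. What remains is to recognize the denominator as exactly the subspace $A'$ from Lemma \ref{LemAlg} with the assignments $C=D=E$, $C'=W\cap V$, $D'=\langle W,V\rangle$, at which point the lemma immediately delivers the desired isomorphism with $\Hom(W\cap V, E/\langle W,V\rangle)$.

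The key step is therefore to justify the identity
\[ T_e G' + T_e H = \{\psi \in \Hom(E,E) : \psi(W\cap V) \subset \langle W,V\rangle\}. \]
The inclusion $\subseteq$ is automatic: if $\psi(V)\subset V$ or $\psi(W)\subset W$, then $\psi$ sends $W\cap V$ into $V$ or into $W$, hence into $\langle W,V\rangle$. For $\supseteq$, I would fix complements $W = (W\cap V)\oplus W'$, $V = (W\cap V)\oplus V'$ and $E = \langle W,V\rangle \oplus U$, producing the splitting $E = (W\cap V)\oplus W' \oplus V' \oplus U$. Given $\psi$ with $\psi(W\cap V)\subset \langle W,V\rangle$, I would define $\psi_1$ and $\psi_2$ summand by summand, routing the $V$-valued components of $\psi$ into $\psi_1$ (to ensure $\psi_1(V)\subset V$) and the $W$-valued components into $\psi_2$ (to ensure $\psi_2(W)\subset W$); the hypothesis is used precisely to guarantee that $\psi|_{W\cap V}$ has no $U$-component, which is what makes the split consistent. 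With this, applying Lemma \ref{LemAlg} gives the proposition.

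The main obstacle is purely bookkeeping in this last step — there are four source summands and four target summands to juggle — but nothing deep is required, and each individual check reduces to observing which summand lies in $V$, which lies in $W$, and that the $U$-component of $\psi|_{W\cap V}$ is zero. Once the identification of $T_e G' + T_e H$ with $A'$ is in place, the proposition is a one-line consequence of Lemma \ref{LemAlg}.
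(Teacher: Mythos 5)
Your proposal follows the paper's proof step for step: apply Lemma~\ref{LemLie} (and the isomorphism theorems) to obtain $N_W\sigma_V \cong \Hom(E,E)/(T_eG'+T_eH)$, identify $T_eG'+T_eH$ with $\{\psi : \psi(W\cap V)\subset\langle W,V\rangle\}$, and conclude by Lemma~\ref{LemAlg} with $C=D=E$, $C'=W\cap V$, $D'=\langle W,V\rangle$. The only place the paper elides detail is exactly the identification of $T_eG'+T_eH$, which it labels ``Clearly''; you try to spell out the $\supseteq$ inclusion.

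One caution on your routing sketch. With the decomposition $E=(W\cap V)\oplus W'\oplus V'\oplus U$, the rule you describe---put the $V$-valued blocks into $\psi_1$ and the $W$-valued blocks into $\psi_2$---does not quite close up, because $W$ and $V$ overlap in $W\cap V$ and because the constraints $\psi_1(V)\subset V$, $\psi_2(W)\subset W$ are conditions on \emph{source} columns of $\psi_1,\psi_2$, not just on where values land. Concretely, if $\psi_1$ takes the rows landing in $V$ and $\psi_2$ the rest, then the $W'\to U$ block of $\psi$ ends up in $\psi_2$, which violates $\psi_2(W)\subset W$ since $\psi_{42}$ need not vanish. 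The fix is to route by source as well as target: put the $W'$- and $U$-columns entirely into $\psi_1$ (harmless since $W',U\not\subset V$), put the $V'$-column entirely into $\psi_2$ (harmless since $V'\not\subset W$), and split the $W\cap V$-column by target---its $V$-valued parts into $\psi_1$, its $W'$-valued part into $\psi_2$, and note its $U$-component vanishes by hypothesis. This is a small repair and the plan is otherwise correct and identical to the paper's.
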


In fact we get a little more, looking at the proof of Lemma \ref{LemAlg} we see that the canonical quotient map
\[q:T_W\gr_{2a}(E)\to N_W\sigma_{V}\]
can be expressed as
\[ q(\psi)=\pi\circ \psi\circ\iota,\]
where $\pi:E\to E/\langle W, V\rangle$ is the canonical quotient map and $\iota: W\cap V\to E$ is the inclusion.
\subsection{Orienting the normal spaces}
The following fact is well known for those who ever wanted to calculate the orientation of a real Hom-space:
\begin{lemma}
  Suppose that $A,B$ are even dimensional real vector spaces with bases $\{a_i:i=1,\dots,m\}$ and $\{b_j:j=1,\dots,n\}$, respectively. Then the orientation on $\Hom(A,B)$, defined by the lexicographic ordering is independent of the orientations of $A$ and $B$.
\end{lemma}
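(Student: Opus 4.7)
The plan is to exploit the natural identification $\Hom(A,B) \cong A^* \otimes B$, under which an automorphism $T \in \GL(A)$ acts on $\Hom(A,B)$ by $\psi \mapsto \psi \circ T^{-1}$ and an automorphism $S \in \GL(B)$ by $\psi \mapsto S \circ \psi$. A change in the chosen basis of $A$ or $B$ corresponds to such an action, and the question is whether the induced automorphism of $\Hom(A,B)$ has positive determinant whenever $\dim A$ and $\dim B$ are even.

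First, I would unwind the definition: under the lexicographic ordering, the distinguished basis of $\Hom(A,B)$ is the family of matrix units $\{E_{ij}\}$, where $E_{ij}(a_k)=\delta_{ik}b_j$, listed in lex order of $(i,j)$. Two choices of oriented bases of $A$ (resp.\ $B$) are related by an element $T\in\GL(A)$ (resp.\ $S\in\GL(B)$), and they determine the same orientation of $A$ (resp.\ $B$) precisely when $\det T>0$ (resp.\ $\det S>0$). The statement to prove is that the lex-basis orientation on $\Hom(A,B)$ is unchanged even when $\det T<0$ or $\det S<0$, provided $\dim A$ and $\dim B$ are both even.

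The key step is the determinant computation: under the isomorphism $\Hom(A,B)\cong A^{*}\otimes B$, the action of $T$ has determinant $(\det T)^{-\dim B}$ (since $T$ acts on $A^{*}$ with determinant $(\det T)^{-1}$, tensored with the identity on $B$), and the action of $S$ has determinant $(\det S)^{\dim A}$. Thus the total determinant of the change of basis on $\Hom(A,B)$ induced by $(T,S)$ is $(\det T)^{-\dim B}(\det S)^{\dim A}$. When $\dim A$ and $\dim B$ are both even, this quantity is a product of even powers of nonzero reals, hence strictly positive, regardless of the signs of $\det T$ and $\det S$.

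I do not expect a significant obstacle; the only subtlety is just to check the formula for the induced action on $\Hom$ carefully. One could alternatively prove it by hand: reversing the orientation of $A$ amounts to replacing $a_1$ by $-a_1$, which changes the sign of every $E_{1j}$ for $j=1,\dots,\dim B$ — an even number of sign flips when $\dim B$ is even — and similarly on the $B$ side. The tensor-product argument simply packages this observation and makes clear why the parity hypothesis on \emph{both} dimensions is needed.
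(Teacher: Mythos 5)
Your proof is correct, and it takes a somewhat different route from the paper's. The paper argues by a single parity count: it observes that interchanging $a_1$ with $a_2$ (an orientation-reversing generator of the non-identity component of $\GL(A)$) permutes the lex-ordered matrix units $\{E_{ij}\}$ by $n$ transpositions, and symmetrically that swapping $b_1,b_2$ gives $m$ transpositions, each an even number. You instead identify $\Hom(A,B)\cong A^{*}\otimes B$ and compute the determinant of the induced action of a general pair $(T,S)\in\GL(A)\times\GL(B)$, getting $(\det T)^{-\dim B}(\det S)^{\dim A}$, which is positive whenever $\dim A$ and $\dim B$ are even. Your determinant formula is correct, and your computation handles an arbitrary change of oriented basis directly, whereas the paper's argument checks one representative of the non-identity component and implicitly relies on the sign of the determinant being locally constant on $\GL$. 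The "by hand" variant you sketch at the end (negating $a_1$ flips the sign of the $n$ vectors $E_{1j}$) is essentially the paper's argument with a sign flip in place of a transposition, and is equally valid. Both approaches are sound; yours is slightly more structural and makes the parity hypothesis on both dimensions visible in a single formula, while the paper's is a quicker one-line parity check.
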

\begin{proof}
  Interchanging $a_1$ with $a_2$ results in $n$ transpositions in the lexicographic ordering. Similarly, interchanging $b_1$ with $b_2$ results in $m$ transpositions in the lexicographic ordering, either way we add even number of transpositions, so the orientation does not change.
\end{proof}
Notice that the other orientation would also be canonical, we make a choice by using the lexicographic ordering.
\begin{corollary} The vector space $T_W\gr_{2a}(E)$ admit a canonical orientation. The vector spaces $N_W\sigma_{V_i}$ admit canonical orientations if $a$ and $b$ are both even.
\end{corollary}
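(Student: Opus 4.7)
The plan is to combine the two Hom-space descriptions obtained in the previous subsection with the immediately preceding lemma on lexicographic orientations of Hom-spaces between even-dimensional vector spaces. The whole argument is a short dimension count.

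First, for the tangent space, I would invoke the identification
\[T_W\gr_{2a}(E)\cong\Hom(W,E/W)\]
coming from Lemmas \ref{LemLie} and \ref{LemAlg} applied to the transitive $\GL(E)$-action. Since $\dim W=2a$ and $\dim(E/W)=2b-2a$ are always even, the preceding lemma applies with no hypothesis on the parities of $a$ or $b$, and produces an orientation on $\Hom(W,E/W)$ that is independent of the chosen bases of the two factors. Transporting this orientation through the canonical isomorphism yields a canonical orientation on $T_W\gr_{2a}(E)$.

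Second, for the normal space, I would invoke the Proposition
\[N_W\sigma_{V_i}\cong\Hom\bigl(W\cap V_i,\, E/\langle W,V_i\rangle\bigr).\]
For $W$ in the smooth part of $\sigma_{V_i}$, i.e. where $\dim(W\cap V_i)=a$, the inclusion--exclusion formula gives
\[\dim\langle W,V_i\rangle=\dim W+\dim V_i-\dim(W\cap V_i)=2a+b-a=a+b,\]
so $\dim\bigl(E/\langle W,V_i\rangle\bigr)=2b-(a+b)=b-a$. When $a$ and $b$ are both even, both $a$ and $b-a$ are even, so the lemma again delivers a basis-independent orientation on the relevant Hom-space, which transports to a canonical orientation on $N_W\sigma_{V_i}$.

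There is really no obstacle here beyond keeping the dimension bookkeeping straight; the content of the corollary is that the parity hypothesis on $a$ and $b$ is exactly what is needed to feed both factors of each Hom-space into the preceding lemma. I would also remark in passing that the parity hypothesis is not needed for the tangent space (both factors are automatically even-dimensional), which is why the corollary states the two conclusions separately.
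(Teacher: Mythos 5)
Your proof is correct and matches the paper's (implicit) argument exactly: identify $T_W\gr_{2a}(E)\cong\Hom(W,E/W)$ and $N_W\sigma_{V_i}\cong\Hom(W\cap V_i,E/\langle W,V_i\rangle)$, count dimensions ($2a$ and $2(b-a)$ for the tangent space; $a$ and $b-a$ for the normal space), and feed them into the preceding lemma on lexicographic orientations of Hom-spaces between even-dimensional spaces. Your side remark that the tangent space needs no parity hypothesis while the normal space does is precisely the point of the corollary.
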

\subsection{The splitting map $\xi$.} The notion of transversality fitting our purpose the best is
\begin{definition} The subspaces $A_i,\ i=1,\dots n$ of the vector space $A$ are transversal, if the splitting map
\[\xi:=\bigoplus_{i=1}^nq_i:A\to\bigoplus_{i=1}^nA/A_i,  \]
is an isomorphism, where $q_i:A\to A/A_i$ are the canonical quotient maps.
\end{definition}
Moreover we say that submanifolds intersecting at a point are transversal at this point, if their tangent spaces are transversal subspaces of the tangent space of the ambient manifold. For subvarieties we add the extra condition that the intersection point has to be a smooth point of all the subvarieties.

Applying our definition to the varieties $\sigma_{V_i}$ we obtain the splitting map
\[\xi_W:=\bigoplus_{i=1}^4q_i:T_W\gr\to\bigoplus_{i=1}^4N_W\sigma_{V_i}.  \]
Notice that both the source and target of $\xi_W$ are canonically oriented. Assume now that the subvarieties $\sigma_{V_i}$ are transversal. We arrived to the definition of the sign of a balanced subspace:
\begin{definition} We say that the \emph{sign $\varepsilon(W)$ of a balanced subspace} $W$ is $+1$ if $\xi_W$ is orientation preserving, and $-1$ otherwise.
\end{definition}
To determine whether $\xi_W$ is orientation preserving or not, we will pick bases compatible with the given orientations for the vector spaces involved and calculate the determinant of the matrix of $\xi_W$ in these bases. The determinant depends on the choice of the bases but its sign does not.

Let us remind the readers that this definition is simply the unfolding of the cohomological definition of the sign of the cohomology class of the intersection point.
\subsection{The balanced decomposition of $E$ and the determinant of the splitting map $\xi_W$.}
The calculation can be simplified with the proper choice of bases. There is no canonical choice, but almost. Corollary \ref{bijection} immediately implies that
\begin{proposition} The vector space $E$ is the direct sum of minimal (w.r.t. inclusion) balanced subspaces.
\end{proposition}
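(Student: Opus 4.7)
The plan is to reduce the statement to a statement about $\varphi$-invariant subspaces of $V_1$ via the bijection of Corollary~\ref{bijection}, and then use the fact that in the generic case $\varphi$ has $b$ distinct eigenvalues, so $V_1$ decomposes as a direct sum of minimal $\varphi$-invariant subspaces.

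First, I would observe that the bijection $W\mapsto W\cap V_1$ of Corollary~\ref{bijection} extends: a subspace $W\subset E$ with $\dim(W\cap V_i)=\dim(W)/2$ for all $i$ corresponds (via the same proof) to a $\varphi$-invariant subspace $U\subset V_1$, with inverse $U\mapsto U+\gamma_3(U)$. This correspondence is inclusion-preserving in both directions, so minimal balanced subspaces of $E$ correspond to minimal $\varphi$-invariant subspaces of $V_1$.

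Next, I would invoke the genericity assumption. By Remark~\ref{rem:distinct} and the definition of generic position, $\varphi:V_1\to V_1$ has $b$ pairwise distinct eigenvalues over $\C$. Hence the minimal $\varphi$-invariant subspaces of $V_1$ (over the base field) are the real $1$-dimensional eigenspaces (for the real eigenvalues) and the real $2$-dimensional invariant planes (for each conjugate pair), and these minimal invariant subspaces give a direct sum decomposition $V_1=U_1\oplus\dotsb\oplus U_k$.

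Finally, set $M_j:=U_j+\gamma_3(U_j)$; these are the minimal balanced subspaces. Since $\gamma_3$ is injective and $U_j\cap\gamma_3(U_j)\subset V_1\cap V_2=\{0\}$, we have $\dim M_j=2\dim U_j$. Summing,
\[
\sum_{j=1}^k M_j \;=\; \sum_j U_j\;+\;\gamma_3\!\Big(\sum_j U_j\Big)\;=\;V_1+\gamma_3(V_1)\;=\;V_1+V_2\;=\;E,
\]
while $\sum_j \dim M_j=2\sum_j\dim U_j=2b=\dim E$. A sum of subspaces whose dimensions add up to the dimension of the ambient space, and which spans it, must be direct; hence $E=M_1\oplus\dotsb\oplus M_k$.

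There is no serious obstacle — everything reduces to the bijection of Corollary~\ref{bijection} together with the elementary linear algebra fact that an operator with distinct eigenvalues decomposes the space as a direct sum of its minimal invariant subspaces. The only point requiring care is that over $\R$ ``minimal invariant'' includes $2$-dimensional planes coming from complex conjugate eigenvalue pairs, which is why the minimal balanced subspaces may be either $2$- or $4$-dimensional.
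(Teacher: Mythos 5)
Your proof is correct and fills in the details of exactly the argument the paper leaves implicit: the paper offers no proof beyond the remark that the statement ``immediately'' follows from Corollary~\ref{bijection}, and your unfolding (minimal balanced subspaces of $E$ correspond to minimal $\varphi$-invariant subspaces of $V_1$, then decompose $V_1$ via the distinct eigenvalues of $\varphi$, then transport this decomposition back via $U\mapsto U+\gamma_3(U)$) is precisely that immediate implication, with the dimension count supplying the directness.
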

For the complex case they are all 2-dimensional, corresponding to the eigenvalues of $\varphi$. For the real case the real eigenvalues of $\varphi$ correspond to 2-dimensional, the complex conjugate pairs to 4-dimensional minimal balanced subspaces. We will refer to this decomposition as the \emph{block-decomposition}.  For us the key property is that all the subspaces $V_i$ and all the balanced subspaces are \emph{homogeneous} (in the sense of graded modules) with respect to the block-decomposition: They are the sum of their intersections with the blocks. Let us introduce some notations: The set of minimal balanced subspaces will be denoted by $\mathcal B$. Given a balanced subspace $W$ of dimension $2a$,
\[S_W:=\{B\in \mathcal{B}:B\subset W\},\qquad  V^B_i:=V_i\cap B, \qquad \bar V_i^B:=B/V_i^B. \]
Then we have the decompositions:
\[E=\bigoplus_{B\in \mathcal{B}}B,\qquad W=\bigoplus_{B\in S_W}B, \qquad V_i=\bigoplus_{B\in \mathcal{B}}V_i^B.\]
Consequently, for the tangent and normal spaces:
\[T_W\gr\iso\bigoplus_{A\in S_W}\bigoplus_{B\notin S_W}\Hom(A,B),\qquad N_W\sigma_{V_i}\iso\bigoplus_{A\in S_W}\bigoplus_{B\notin S_W}\Hom(V_i^A,\bar V_i^B).\]
In this decomposition our main object of study, the map $\xi_W$ becomes "block-diagonal" therefore
\[\det(\xi_W)=\prod_{A\in S_W}\prod_{B\notin S_W}\det(\xi^{A,B}),  \]
where $\xi^{A,B}=\bigoplus_{i=1}^4 \xi_i^{A,B}:\Hom(A,B)\to\bigoplus_{i=1}^4\Hom(V_i^A,\bar V_i^B)$ is the splitting map corresponding to the blocks $A$ and $B$, more precisely for $\psi\in \Hom(A,B)$ we have
\begin{equation}\label{eq:pi-i}
  \xi_i^{A,B}(\psi)=\pi\circ \psi\circ\iota,
\end{equation}
where $\pi:B\to \bar V_i^B$ is the canonical quotient map and $\iota: V_i^A\to A$ is the inclusion.

\subsection{Bases for the blocks.} We reduced the main calculation to four cases, depending on the dimension of the blocks $A$ and $B$. It is time to get our hands dirty; in this section we introduce bases for the vector spaces involved.

\subsubsection{Bases for a 2-dimensional block.} According to Corollary \ref{bijection}. a 2-dimensional block $B$ corresponds to a real eigenvalue $\beta$ of $\varphi$. Choose an arbitrary generator $x$ of the one-dimensional space $V_1^B$. Choose the generator of  $V_2^B$ to be $y=\gamma_3(x)$, thus we have the basis $(x,\gamma_3(x))$ of $B$. Then, following the proof of  Corollary \ref{bijection}. $V_3^B$ is spanned by $x+y$ and $V_4^B$ is spanned by $x+\beta y$. Written in the $(x,y)$ basis we have:
\begin{equation}\label{2-dim-bases}
        V_1^B=\left\langle \begin{pmatrix} 1\\0 \end{pmatrix}\right\rangle,
 \qquad V_2^B=\left\langle \begin{pmatrix} 0\\1 \end{pmatrix}\right\rangle,
 \qquad V_3^B=\left\langle \begin{pmatrix} 1\\1 \end{pmatrix}\right\rangle,
 \qquad V_4^B=\left\langle \begin{pmatrix} 1\\ \beta \end{pmatrix}\right\rangle.
\end{equation}
We will also need generators for the spaces $\bar V_i^B$. Using the scalar product defined by $x,y$ we identify $\bar V_i^B$ with the orthogonal complement of $V_i^B$. A convenient choice for generators is:
\begin{equation}\label{2-dim-bar-bases}
       \bar  V_1^B=\left\langle \begin{pmatrix} 0\\-1 \end{pmatrix}\right\rangle,
 \qquad\bar  V_2^B=\left\langle \begin{pmatrix} 1\\0 \end{pmatrix}\right\rangle,
 \qquad\bar  V_3^B=\left\langle \begin{pmatrix} 1\\-1 \end{pmatrix}\right\rangle,
 \qquad\bar  V_4^B=\left\langle \begin{pmatrix} \beta\\-1 \end{pmatrix}\right\rangle.
\end{equation}

\subsubsection{Bases for a 4-dimensional block.} According to Corollary \ref{bijection}. a 4-dimensional block $B$ corresponds to a pair of complex conjugate  eigenvalues $\lambda=\mu+i\nu,\ \bar \lambda=\mu-i\nu$ of $\varphi$. Choose an arbitrary pair of generators $(x,y)$ of the two-dimensional space $V_1^B$. Choose the generator of  $V_2^B$ to be $z=\gamma_3(x),w=\gamma_3(y)$, thus we have the basis $(x,y,\gamma_3(x),\gamma_3(y))$ of $B$. Then, following the proof of  Corollary \ref{bijection}. $V_3^B$ is spanned by $x+z,\ y+w$ and $V_3^B$ is spanned by $x+\mu z-\nu w,\ y+\nu z+\mu w$. In the $x,y,z,w$ bases we get the following, where we omit the brackets and commas for typographical reasons and refer to these matrices as the matrix of the corresponding subspace.):
\begin{equation}\label{4-dim-bases}
        V_1^B=\left\langle \begin{matrix} 1\\0\\0\\0 \end{matrix} \      \begin{matrix} 0\\1\\0\\0 \end{matrix}\right\rangle,
 \ V_2^B=\left\langle \begin{matrix} 0\\0\\1\\0 \end{matrix}      \      \begin{matrix} 0\\0\\0\\1 \end{matrix}\right\rangle,
 \ V_3^B=\left\langle \begin{matrix} 1\\0\\1\\0 \end{matrix}      \      \begin{matrix} 0\\1\\0\\1 \end{matrix}\right\rangle,
 \ V_4^B=\left\langle \begin{matrix} 1\\0\\ \mu\\ -\nu  \end{matrix} \    \begin{matrix} 0\\1\\ \nu\\ \mu \end{matrix}\right\rangle.
\end{equation}
We will also need generators for the spaces $\bar V_i^B$. Using the scalar product defined by $x,y,z,w$ we identify $\bar V_i^B$ with the orthogonal complement of $V_i^B$. A convenient choice for generators is:
\begin{equation}\label{4-dim-bar-bases}
\bar V_1^B=\left\langle \begin{matrix} 0\\0\\1\\0 \end{matrix}     \  \begin{matrix} 0\\0\\0\\1 \end{matrix}\right\rangle,
 \ \bar V_2^B=\left\langle \begin{matrix} 1\\0\\0\\0 \end{matrix}  \          \begin{matrix} 0\\1\\0\\0 \end{matrix}\right\rangle,
 \ \bar V_3^B=\left\langle \begin{matrix} 1\\0\\-1\\0 \end{matrix}  \          \begin{matrix} 0\\1\\0\\-1 \end{matrix}\right\rangle,
 \ \bar V_4^B=\left\langle \begin{matrix} -L\\0\\ \mu\\ -\nu  \end{matrix} \    \begin{matrix} 0\\-L\\ \nu\\ \mu \end{matrix}\right\rangle,
\end{equation}
where $L=\mu^2+\nu^2$.
\subsection{The matrix of the splitting map.} According to (\ref{eq:pi-i}) we need to calculate the matrix of an inclusion and a quotient map. We identified the quotients $\bar V_i^B$ with orthogonal complements. Under this identification the quotient map is identified with an orthogonal projection. We will use the following trivial facts:
\begin{lemma}\label{le:inclusion} Let $C$ be a subspace of the vector space $D$. Let $\{d_i:i=1,\dots,n\}$ be a basis for $D$ and $\{c_j=\sum c_{ji}d_i :j=1,\dots,m\}$ a basis for $C$. Then the matrix $M$ of the inclusion $\iota:C\to D$ in these bases is $M_{ij}=c_{ji}$.
\end{lemma}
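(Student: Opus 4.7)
The plan is to simply unwind the definition of the matrix representing a linear map in prescribed bases. By convention, given a linear map $f\colon C\to D$ and bases $(c_j)_{j=1,\dots,m}$ of $C$ and $(d_i)_{i=1,\dots,n}$ of $D$, the matrix $M$ representing $f$ in these bases is characterized by the identity $f(c_j)=\sum_i M_{ij}\,d_i$; that is, the $j$-th column of $M$ records the coordinates of the image of the $j$-th source basis vector, expanded in the target basis.

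Applying this to the inclusion $\iota\colon C\to D$ is immediate: since $\iota(c_j)=c_j$ and we are given the expansion $c_j=\sum_i c_{ji}\,d_i$, comparing with $\iota(c_j)=\sum_i M_{ij}\,d_i$ forces $M_{ij}=c_{ji}$. There is no real obstacle here; the only subtlety worth flagging is the transposition of indices, which arises solely because matrices act on column vectors from the left, so the column index of $M$ matches the source basis while the row index matches the target. This index convention is precisely the one already used silently in the displays \eqref{2-dim-bases} and \eqref{4-dim-bases}, where each listed column vector is the coordinate expansion of a basis element of the corresponding subspace in the chosen basis of the ambient block.
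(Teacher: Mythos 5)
Your proof is correct. The paper presents this lemma as one of its ``trivial facts'' and supplies no proof at all, so the routine verification you give---unwinding the definition of the matrix of a linear map and noting $\iota(c_j)=c_j=\sum_i c_{ji}d_i$---is exactly the intended argument; your remark about the index transposition coming from the column-vector convention is an accurate and useful observation.
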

\begin{lemma}\label{le:projection}
  Let $S$ be a subspace of the vector space $T$ equipped with a scalar product. Let $\{t_k:k=1,\dots,v\}$ be an orthonormal basis for $T$ and $\{s_l=\sum s_{lk}t_k :l=1,\dots,u\}$ an orthonormal  basis for $S$. Then the matrix $P$ of the orthogonal projection $\pi:T\to S$ in these bases is $P_{lk}=s_{lk}$.
\end{lemma}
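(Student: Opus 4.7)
The plan is to compute $\pi(t_k)$ explicitly for each basis vector $t_k$ of $T$ and read off the matrix entries. Since $\{s_l\}$ is an orthonormal basis of $S$ and $\pi$ is orthogonal projection onto $S$, the standard formula for projection onto a subspace with orthonormal basis gives
\[
\pi(t_k) \;=\; \sum_{l=1}^u \langle t_k,s_l\rangle\, s_l.
\]
This formula is itself an immediate consequence of the fact that $\pi(t_k)$ is the unique element of $S$ whose difference with $t_k$ is orthogonal to every $s_l$, together with the Fourier expansion in an orthonormal basis of $S$.

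Next I would compute $\langle t_k,s_l\rangle$ using the given expansion $s_l=\sum_j s_{lj} t_j$ and the orthonormality of $\{t_k\}$:
\[
\langle t_k,s_l\rangle \;=\; \Big\langle t_k,\sum_{j=1}^v s_{lj} t_j\Big\rangle \;=\; \sum_{j=1}^v s_{lj}\langle t_k,t_j\rangle \;=\; s_{lk}.
\]
Substituting this back yields $\pi(t_k)=\sum_{l} s_{lk}\, s_l$, which by the definition of the matrix of a linear map in given bases means that the $(l,k)$-entry of $P$ is $s_{lk}$, as claimed.

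There is essentially no obstacle here: the statement is a direct bookkeeping consequence of the Fourier expansion formula for orthogonal projection and of the orthonormality of the basis $\{t_k\}$. The only point that warrants a brief mention is that one must index the entries consistently — the $l$-th row and $k$-th column of $P$ records the coefficient of $s_l$ in $\pi(t_k)$, which matches the convention used in Lemma \ref{le:inclusion} for the inclusion matrix.
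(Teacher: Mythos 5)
Your proof is correct. The paper states this lemma without proof, labeling it (together with the companion Lemma \ref{le:inclusion}) a ``trivial fact,'' and the argument you give --- the Fourier-expansion formula $\pi(t_k)=\sum_l\langle t_k,s_l\rangle s_l$ combined with orthonormality of $\{t_k\}$ to evaluate $\langle t_k,s_l\rangle=s_{lk}$ --- is exactly the intended routine verification.
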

The splitting map is a linear map between vector spaces of linear maps. A natural choice of a basis in both the domain and range is the set of linear transformations $e_{ik}$ with matrices $E_{ki}$ with a single 1 entry (we already have bases on our vector spaces!) with the row-column lexicographic ordering.  Combining the previous two lemmas we get:
\begin{lemma}\label{le:inclusion} Let $C$ be a subspace of the vector space $D$. Let $\{d_i:i=1,\dots,n\}$ be a basis for $D$ and $\{c_j=\sum c_{ji}d_i :j=1,\dots,m\}$ a basis for $C$.

Let $S$ be a subspace of the vector space $T$ equipped with a scalar product. Let $\{t_k:k=1,\dots,v\}$ be an orthonormal basis for $T$ and $\{s_l=\sum s_{lk}t_k :l=1,\dots,u\}$ an orthonormal  basis for $S$.

Then the matrix $Q$ of $q_{ik}=\pi\circ  e_{ik}\circ \iota$ is $Q_{lj}= s_{lk}c_{ji}$ for the inclusion $\iota:C\to D$ and orthogonal projection $\pi:T\to S$.
\end{lemma}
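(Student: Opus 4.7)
The plan is to compose the three linear maps $\iota$, $e_{ik}$, and $\pi$ on the basis vector $c_j$ of $C$, using the two preceding lemmas together with the explicit action of the elementary map $e_{ik}$. Since the statement combines the inclusion lemma and the projection lemma with one extra rank-one factor, nothing new beyond a careful index-tracking is required.

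First I would recall that by the inclusion lemma the vector $\iota(c_j) \in D$ expands as $\sum_i c_{ji} d_i$ in the basis $\{d_i\}$. The map $e_{ik} \colon D \to T$ is, by construction, the one whose matrix in the chosen bases is $E_{ki}$ (a single $1$ in row $k$, column $i$); equivalently $e_{ik}(d_{i'}) = \delta_{i'i} t_k$. Applying $e_{ik}$ to $\iota(c_j)$ therefore collapses the sum and gives $e_{ik}(\iota(c_j)) = c_{ji}\, t_k$. Next, by the orthogonal projection lemma, $\pi(t_k) = \sum_l s_{lk}\, s_l$. Putting the three steps together,
\[
q_{ik}(c_j) \;=\; \pi\bigl(e_{ik}(\iota(c_j))\bigr) \;=\; c_{ji} \sum_l s_{lk}\, s_l \;=\; \sum_l \bigl(s_{lk}\, c_{ji}\bigr) s_l,
\]
so the coefficient of $s_l$ in $q_{ik}(c_j)$ is $s_{lk}\, c_{ji}$, which is exactly the claimed entry $Q_{lj}$.

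The only thing requiring care, and the closest thing to an obstacle, is keeping the four indices $i,j,k,l$ in the correct roles: $j$ indexes columns of $Q$ (the input basis of $C$), $l$ indexes rows (the output basis of $S$), while $i$ and $k$ are the fixed indices selecting which rank-one map $e_{ik}$ we are composing with. Once this bookkeeping is done, the formula $Q_{lj} = s_{lk} c_{ji}$ is read off immediately, so no further technical step is needed.
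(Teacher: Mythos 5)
Your computation is correct and is exactly what the paper intends by the remark ``Combining the previous two lemmas we get:'' before stating this lemma; the paper gives no written proof, expecting the reader to do precisely the composition $\pi\circ e_{ik}\circ\iota$ on the basis vectors $c_j$ that you carry out. The index bookkeeping ($j$ for columns of $Q$, $l$ for rows, with $i,k$ fixed by the choice of $e_{ik}$) matches the paper's conventions, so nothing is missing.
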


Now we can construct the matrix of the splitting map. The bases for $\bar V_i^B$ are not orthonormal, only orthogonal, so we normalize with an appropriate scaling factor. Since this factor is positive (the norm square of the corresponding vector), it does not alter the sign of the determinant, so we omit it. Applying the previous lemma for $C=V_i^A$, $D=A$, $T=B$, $S=V_i^B$, and the basis described in the previous section we can see that we need to take the \emph{tensor products} of the matrices of $V_i^A$ and $ \bar V_i^B$, transpose them and write them under each other.
\subsubsection{$A$ and $B$ are 2-dimensional} Let $A$ correspond to the eigenvalue $\alpha$ and $B$ correspond to the eigenvalue $\beta$.
Following the instructions above we arrive to the matrix
\begin{equation}\label{m22}
  \xi^{\alpha\beta}=\left(
             \begin{array}{cccc}
               0 & 0 & -1 & 0 \\
               0 & 1 & 0 & 0 \\
               1 & 1 & -1 & -1 \\
               \beta & \alpha\beta & -1 & -\alpha \\
             \end{array}
           \right)
\end{equation}
The determinant is easy to calculate by hand:
\begin{equation}\label{dm22}
  \det(\xi^{\alpha\beta})=\beta-\alpha.
\end{equation}
\subsubsection{$A$ is 2-dimensional, $B$ is 4-dimensional} Let $A$ correspond to the eigenvalue $\alpha$ and $B$ correspond to the pair of eigenvalues $\lambda=\mu+i\nu,\ \bar \lambda=\mu-i\nu$.
Following the instructions above we arrive to the matrix:
\begin{equation}\label{m24}
\xi^{\alpha,\mu\pm i\nu}=\left( \begin {array}{cccccccc}
0&0&0&0&1&0&0&0\\ \noalign{\medskip}0
&0&0&0&0&0&1&0\\ \noalign{\medskip}0&1&0&0&0&0&0&0
\\ \noalign{\medskip}0&0&0&1&0&0&0&0\\ \noalign{\medskip}1&1&0&0&-1&-1&
0&0\\ \noalign{\medskip}0&0&1&1&0&0&-1&-1\\ \noalign{\medskip}-L&-\alpha\, L&
0&0&\mu&\alpha\,\mu&\nu&\alpha\,\nu\\ \noalign{\medskip}0&0&-L&-\alpha\,L&\nu&\alpha\,\nu&\mu&\alpha\,\mu\end {array} \right)
\end{equation}
The determinant is:
\begin{equation}\label{dm24}
 \det(\xi^{\alpha,\mu\pm i\nu})=\left( {\nu}^{2}+{\mu}^{2} \right)  \left( {\mu}^{2}-2\,\alpha\,\mu+{
\alpha}^{2}+{\nu}^{2} \right)=(\nu^{2}+{\mu}^{2})(\lambda-\alpha)(\bar\lambda-\alpha).
\end{equation}
\subsubsection{$A$ is 4-dimensional, $B$ is 4-dimensional} Let $A$ correspond to the pair of eigenvalues $\mu+i\nu,\ \mu-i\nu$  and $B$ correspond to the pair of eigenvalues $\delta=\varkappa+i\vartheta,\ \bar\delta=\varkappa-i\vartheta$.
Following the instructions above and using the notation $K=\varkappa^2+\vartheta^2$ we arrive to the matrix of $\xi^{\mu\pm i\nu,\varkappa\pm i\vartheta}$:

\begin{equation}\label{m44} \setlength{\arraycolsep}{2.5pt}
\left( \begin {array}{cccccccccccccccc}
			       0&0&0&0&0&0&0&0&1&0&0&0&0&0&0&0
\\ \noalign{\medskip}0&0&0&0&0&0&0&0&0&1&0&0&0&0&0&0
\\ \noalign{\medskip}0&0&0&0&0&0&0&0&0&0&0&0&1&0&0&0
\\ \noalign{\medskip}0&0&0&0&0&0&0&0&0&0&0&0&0&1&0&0
\\ \noalign{\medskip}0&0&1&0&0&0&0&0&0&0&0&0&0&0&0&0
\\ \noalign{\medskip}0&0&0&1&0&0&0&0&0&0&0&0&0&0&0&0
\\ \noalign{\medskip}0&0&0&0&0&0&1&0&0&0&0&0&0&0&0&0
\\ \noalign{\medskip}0&0&0&0&0&0&0&1&0&0&0&0&0&0&0&0
\\ \noalign{\medskip}1&0&1&0&0&0&0&0&-1&0&-1&0&0&0&0&0
\\ \noalign{\medskip}0&1&0&1&0&0&0&0&0&-1&0&-1&0&0&0&0
\\ \noalign{\medskip}0&0&0&0&1&0&1&0&0&0&0&0&-1&0&-1&0
\\ \noalign{\medskip}0&0&0&0&0&1&0&1&0&0&0&0&0&-1&0&-1
\\ \noalign{\medskip}-K&0 &-\mu\,K&\nu\, K& 0&0&0&0& \varkappa&0&\mu\,\varkappa&-\nu\,\varkappa& -\vartheta&0&-\mu\,\vartheta&\nu\,\vartheta
\\ \noalign{\medskip}0&-K &-\nu\,K&-\mu\, K& 0&0&0&0& 0&\varkappa&\nu\,\varkappa&\mu\,\varkappa& 0&-\vartheta&-\nu\,\vartheta&-\mu\,\vartheta
\\ \noalign{\medskip}0&0&0&0& -K&0 &-\mu\,K&\nu\, K& \vartheta&0&\mu\,\vartheta&-\nu\,\vartheta &\varkappa&0&\mu\,\varkappa&-\nu\,\varkappa
\\ \noalign{\medskip}0&0&0&0& 0&-K &-\nu\,K&-\mu\, K& 0&\vartheta&\nu\,\vartheta&\mu\,\vartheta & 0&\varkappa&\nu\,\varkappa&\mu\,\varkappa
\end {array}
 \right)
\end{equation}
\bigskip

The determinant is:
\begin{equation}\label{dm44}
\det(\xi^{\mu\pm i\nu,\varkappa\pm i\vartheta})= K^2\big((\varkappa-\mu)^2+(\vartheta-\nu)^2\big)\big(\varkappa-\mu)^2+(\vartheta+\nu)^2\big)
=K^2(\delta-\lambda)(\delta-\bar\lambda)(\bar\delta-\lambda)(\bar\delta-\bar\lambda).
\end{equation}
\subsubsection{$A$ is 4-dimensional, $B$ is 2-dimensional} Let $A$ correspond to the pair of eigenvalues $\mu+i\nu,\ \mu-i\nu$  and $B$ correspond to the eigenvalue $\beta$.
Following the instructions above we arrive to the matrix:
\begin{equation}\label{m42}
\xi^{\mu\pm i\nu,\beta}=\left( \begin {array}{cccccccc}
0&0&0&0&-1&0&0&0\\ \noalign{\medskip}0
&0&0&0&0&-1&0&0\\ \noalign{\medskip}0&0&1&0&0&0&0&0
\\ \noalign{\medskip}0&0&0&1&0&0&0&0\\ \noalign{\medskip}1&0&1&0&-1&0
&-1&0\\ \noalign{\medskip}0&1&0&1&0&-1&0&-1\\ \noalign{\medskip}\beta&
0&\beta\,\mu&-\beta\,\nu&-1&0&-\mu&\nu\\ \noalign{\medskip}0&\beta&\beta\,\nu&\beta\,\mu
&0&-1&-\nu&-\mu\end {array} \right)
\end{equation}
The determinant is:
\begin{equation}\label{dm42}
\det(\xi^{\mu\pm i\nu,\beta})=\nu^{2}+{\mu}^{2}-2\,\mu\,\beta+{\beta}^{2}=\nu^{2}+(\beta-\mu)^2=(\beta-\lambda)(\beta-\bar\lambda).
\end{equation}

\subsection{Determining the sign of a balanced subspace} All determinants we calculated are positive, except the first one, so we have the following:
\begin{theorem}\label{thm:sign-alg-real} Let $W$ be a balanced subspace, let $R_W$ denote the set of real eigenvalues of $\varphi$ for which the corresponding eigenspace is in $W$, and $\bar R_W$ denote the set of real eigenvalues of $\varphi$ for which the corresponding eigenspace is not in $W$. Then $W$ has a positive sign if and only if the resultant
\[ \prod_{\alpha\in R_W}\prod_{\beta\in \bar R_W}\beta-\alpha   \]
is positive.
\end{theorem}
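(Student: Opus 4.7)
The plan is to assemble the theorem directly from the four block determinant computations already carried out in the previous subsection. The sign $\varepsilon(W)$ is by definition the sign of $\det(\xi_W)$, and we have the block-diagonal factorization
\[ \det(\xi_W)=\prod_{A\in S_W}\prod_{B\notin S_W}\det(\xi^{A,B}), \]
so the task reduces to identifying which of the four local factors can be negative.

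First I would go through the three ``mixed'' cases and verify that their determinants are always positive. When $A$ is 2-dimensional (real eigenvalue $\alpha$) and $B$ is 4-dimensional (conjugate pair $\lambda,\bar\lambda$), formula (\ref{dm24}) gives $\det(\xi^{A,B})=(\nu^2+\mu^2)(\lambda-\alpha)(\bar\lambda-\alpha)$; since $(\lambda-\alpha)(\bar\lambda-\alpha)=|\lambda-\alpha|^2$, this factor is strictly positive (genericity rules out $\lambda=\alpha$ anyway). The case $A$ 4-dimensional, $B$ 2-dimensional is handled the same way from (\ref{dm42}). For the case where both blocks are 4-dimensional, (\ref{dm44}) presents the factor as a product of two complex conjugate pairs times $K^2$, hence also positive.

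Next I would observe that in the remaining case, where both $A$ and $B$ are 2-dimensional (corresponding to real eigenvalues $\alpha\in R_W$ and $\beta\in\bar R_W$ respectively), (\ref{dm22}) yields exactly $\det(\xi^{A,B})=\beta-\alpha$. Multiplying these factors together produces the resultant
\[ \prod_{\alpha\in R_W}\prod_{\beta\in\bar R_W}(\beta-\alpha), \]
and all other factors contribute a positive number. Therefore $\det(\xi_W)$ has the same sign as this product, which is precisely the claim.

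The only potential obstacle is bookkeeping: one must be sure that $R_W$ and $\bar R_W$ are really the indexing sets for the 2-by-2 contributions, i.e.\ that every real eigenvalue of $\varphi$ produces exactly one 2-dimensional block on the appropriate side of the partition $\mathcal B=S_W\sqcup (\mathcal B\setminus S_W)$. This is immediate from Corollary \ref{bijection} and the block-decomposition proposition, since a 2-dimensional block lies in $W$ if and only if the corresponding real eigenvalue lies in $R_W$. Once that is noted, the proof is a one-line assembly of the four determinant formulas.
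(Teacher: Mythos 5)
Your proof is correct and takes exactly the same route as the paper: the paper's own proof of this theorem is the single sentence preceding it, "All determinants we calculated are positive, except the first one," which is precisely your argument spelled out. You simply make explicit the factorization $\det(\xi_W)=\prod\det(\xi^{A,B})$, the positivity of the three mixed/complex block determinants, and the identification of the surviving $2\times 2$ factors with the resultant indexed by $R_W\times\bar R_W$.
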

Using the complex eigenvalues as well gives the following variant:
\begin{theorem}\label{thm:sign-alg-all} Let $W$ be a balanced subspace, let $E_W$ denote the set of all eigenvalues of $\varphi$ for which the corresponding eigenspace is in $W$ (front eigenvalues), and $\bar E_W$ denote the set of all eigenvalues of $\varphi$ for which the corresponding eigenspace is not in $W$ (back eigenvalues). Then $W$ has a positive sign if and only if the resultant
\[ \prod_{\lambda\in E_W}\prod_{\delta\in \bar E_W}\delta-\lambda   \]
is positive.
\end{theorem}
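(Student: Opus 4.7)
My plan is to deduce Theorem \ref{thm:sign-alg-all} directly from Theorem \ref{thm:sign-alg-real} by showing that passing from the real resultant to the full resultant only multiplies it by a strictly positive real number, so the two have the same sign.

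First I would record the crucial symmetry underlying the block decomposition. Every non-real eigenvalue of $\varphi$ belongs to a complex conjugate pair $\lambda, \bar\lambda$ whose joint (real) eigenspace is a single $4$-dimensional real minimal balanced block $B \in \mathcal B$. Because $W$ is a sum of entire blocks from $\mathcal B$, this block is either contained in $W$ or disjoint from it; thus the sets $E_W$ and $\bar E_W$ are each closed under complex conjugation. In particular $R_W = E_W \cap \R$ and $\bar R_W = \bar E_W \cap \R$, and the remaining elements of $E_W$ and $\bar E_W$ come in conjugate pairs.

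Using this I would split the product
\[
P := \prod_{\lambda \in E_W}\prod_{\delta \in \bar E_W}(\delta - \lambda)
\]
according to whether $\lambda$ and $\delta$ are real or non-real. A real $\alpha \in R_W$ paired with a conjugate pair $\{\lambda,\bar\lambda\} \subset \bar E_W$ contributes $(\lambda-\alpha)(\bar\lambda-\alpha) = |\lambda-\alpha|^2 > 0$, and symmetrically for a conjugate pair $\{\lambda,\bar\lambda\} \subset E_W$ paired with a real $\beta \in \bar R_W$, the contribution is $|\beta - \lambda|^2 > 0$. Two distinct conjugate pairs $\{\lambda,\bar\lambda\} \subset E_W$ and $\{\delta,\bar\delta\} \subset \bar E_W$ contribute $(\delta - \lambda)(\delta - \bar\lambda)(\bar\delta - \lambda)(\bar\delta - \bar\lambda) = |\delta - \lambda|^2 |\delta - \bar\lambda|^2 > 0$. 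What remains after factoring off these positive numbers is exactly the real--real resultant $\prod_{\alpha \in R_W, \beta \in \bar R_W}(\beta - \alpha)$ of Theorem \ref{thm:sign-alg-real}. Hence $P$ and the real resultant have the same sign, and the claim follows.

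There is no genuine obstacle here: once the closure of $E_W$ under conjugation is noticed, the theorem is a short algebraic rearrangement. In fact the same positivity phenomenon was already visible in the block determinant formulas (\ref{dm24}), (\ref{dm44}) and (\ref{dm42}), where the nontrivial sign contributions came only from the $2$-dimensional--$2$-dimensional blocks; the present statement simply repackages those positive factors into a single, basis-free resultant over the full spectrum of $\varphi$.
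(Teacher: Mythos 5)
Your proof is correct and is essentially the argument the paper has in mind: the paper derives both Theorem~\ref{thm:sign-alg-real} and Theorem~\ref{thm:sign-alg-all} from the block determinant formulas, where~(\ref{dm24}), (\ref{dm44}) and~(\ref{dm42}) are exactly the positive conjugate-pair factors you exhibit (up to the extra positive scalars $\nu^2+\mu^2$ and $K^2$), and only~(\ref{dm22}) can change sign. Your reorganization — passing from Theorem~\ref{thm:sign-alg-real} to the full resultant by observing that $E_W$ and $\bar E_W$ are closed under conjugation and that the extra factors are modulus-squared terms — is the same observation stated in a basis-free way, which you yourself note at the end.
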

This latter resultant has a linear algebraic interpretation. $V_1$ can be written as $V_1=C\oplus D$, where $C=V_1\cap W$ and $D$ is the (unique) complementary invariant subspace of $\varphi$. Let $\chi_C:=\det(\varphi|_C-xI)$ and $\chi_D:=\det(\varphi|_D-xI)$ denote the corresponding characteristic polynomials. Then
\[ \prod_{\lambda\in E_W}\prod_{\delta\in \bar E_W}\delta-\lambda=\res(\chi_C,\chi_D),  \]
where $\res\big(f(x),g(x)\big)$ denotes the usual resultant of two polynomials.

\begin{remark} \label{rem:generic}
We can also see that our genericity condition is correct, if the eigenvalues are different from 0 and 1, and distinct then $\det(\xi_W)$ is nonzero for any balanced subspace $W$, therefore the subvarieties $\sigma_{V_i}$ are transversal. Also, it is not difficult to see that this is the maximal possible genericity condition: if the subvarieties $\sigma_{V_i}$ are not transversal, then at least one of the intersection points will have a multiplicity, forcing the solution function to drop its value or having infinity as its value, so it will not be locally constant at this configuration.
\end{remark}

\begin{remark}\label{rem:odd} We could use Theorem \ref{thm:sign-alg-real}. as the definition of the sign of a solution, and this definition would make sense for $a$ or $b$ odd as well. However these signs would depend on the order of the subspaces $V_i$ and lack the geometric meaning.
\end{remark}
\section{Combinatorics: the signed sum of solutions and the number of solutions modulo 4}\label{sec:mod4}
In this chapter we prove by a combinatorial argument that the bounds obtained in Remark \ref{bounds}.\ are in fact the signed sums of the solutions (defining  the sign $\varepsilon(W)$ as the the sign of $\det(\xi_W)$, see Remark \ref{rem:odd}.). In Section \ref{sec:schubert} for $a$ and $b$  even we sketched a cohomological proof of this equality, based on real Schubert calculus. The result suggests that a cohomological argument should exist for the other cases as well. Along the proof we get an other interesting result: for $a$ and $b$  even the number of solutions is constant modulo 4. It was observed in \cite{sottile4} that for a wide class of enumerative questions the number of solutions is constant modulo 4. They study so called osculating solutions. It can be considered as an other ``real form'' for the complex Schubert problems, so we can expect similar behavior. Notice that in the  example of lines on a smooth cubic (calculated by B. Segre in \cite{segre}) it is also true that the number of solutions is constant modulo 4. It would be interesting to find a universal explanation. We also show that if $a$ and $b$ are both divisible by $2^k$, then the number of solutions is constant modulo $2^{k+1}$.

\begin{definition} \label{def:sol} Let $a,b,c$ be positive integers satisfying the inequalities $a\leq b$ and $2c\leq b$. Let $B_i$ denote the set $\{b-2i+1, b-2i+2\}$ for $0<i\leq v:=\lfloor b/2\rfloor$. We will call $B_i$ the $i$-th block. Then we use the notation
 \[\sol(a,b,c):=\big\{H\subset \{1,\dots,b\}\ :\ |H|=a \text{ and}\ |H\cap B_i|\neq1\ \text{ for}\ i=1,\dots,c\big\},  \]
 and call it the set of solutions.
\end{definition}
Notice that we indexed the blocks backwards for notational convenience. Elements of $\{1,\dots,b\}$ correspond to the eigenvalues of $\varphi$, among which the first $b-2c$ are real. The remaining eigenvalues are arranged into complex conjugate pairs, corresponding to the blocks $B_1,\dots,B_c$

Corollary \ref{bijection}.\ implies that the sets $H\in \sol(a,b,c)$ are in bijection with the $2a$-dimensional balanced subspaces $W_H$ for a 4-space configuration with $c$ complex conjugate eigenvalues.

For $H\in \sol(a,b,c)$ we assign a permutation $\rho_H\in S_{b-2c}$ by listing the elements of the set $\{1,\dots,b-2c\}\cap H$ in increasing order first, then listing the elements of the set $\{1,\dots,b-2c\}\setminus H$ in increasing order.
\begin{definition} \label{def:sign-set} The sign $\varepsilon(H)$ of a set $H\in \sol(a,b,c)$ is the sign of the permutation $\rho_H\in S_{b-2c}$.
\end{definition}
Theorem \ref{thm:sign-alg-real}. implies that $\varepsilon(H)=\varepsilon(W_H)$.

We introduce three easy lemmas leading to the results of the section. We keep $a$ and $b$ fixed in the rest of the section, so we can omit them from the notation.
\begin{lemma}\label{lem:cs-partition} Let $S$ be a subset of $\{1,\dots,v\}$ for $v=\lfloor b/2\rfloor$. With the notation
\[ C_S=\big\{H\subset \{1,\dots,b\}\ :\ |H|=a \text{ and}\ |H\cap B_i|=1\ \Leftrightarrow i\in S\big\},  \]
we obtain a partition of the set of solutions:
\[\sol(a,b,c)=\bigcup\big\{C_S:\ S\subset \{c+1,\dots,v\}\big\}.\]
\end{lemma}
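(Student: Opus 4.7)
The plan is to verify this essentially by unpacking the definitions: the sets $C_S$ are indexed by the ``block profile'' of $H$, so they automatically partition any family of subsets of $\{1,\dots,b\}$ according to that profile, and the constraint defining $\sol(a,b,c)$ is exactly that the profile avoids the first $c$ blocks.

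First I would introduce, for an arbitrary $a$-element subset $H\subset\{1,\dots,b\}$, the \emph{profile set}
\[ S(H):=\bigl\{i\in\{1,\dots,v\}:|H\cap B_i|=1\bigr\}. \]
From the definition of $C_S$ it is immediate that $H\in C_S$ if and only if $S=S(H)$. Hence each $H$ lies in exactly one $C_S$, which already gives disjointness of the proposed partition.

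Next I would translate the defining inequality of $\sol(a,b,c)$ into the language of profiles: the condition $|H\cap B_i|\neq 1$ for $i=1,\dots,c$ is the same as $S(H)\cap\{1,\dots,c\}=\emptyset$, i.e.\ $S(H)\subseteq\{c+1,\dots,v\}$. Therefore $H\in\sol(a,b,c)$ if and only if $H\in C_{S(H)}$ for some $S(H)\subseteq\{c+1,\dots,v\}$, which gives the two inclusions
\[\sol(a,b,c)\subseteq\bigcup_{S\subseteq\{c+1,\dots,v\}}C_S\quad\text{and}\quad\bigcup_{S\subseteq\{c+1,\dots,v\}}C_S\subseteq\sol(a,b,c),\]
the latter because any $H\in C_S$ with $S\subseteq\{c+1,\dots,v\}$ automatically satisfies $|H\cap B_i|\neq 1$ for all $i\leq c$ and has cardinality $a$ by definition of $C_S$.

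Combining these two observations yields the claimed partition. There is no real obstacle here: the lemma is a bookkeeping statement, and the only thing to be careful about is that $C_S$ is allowed to be empty (e.g.\ if $|S|$ has the wrong parity relative to $a$), which does not affect the partition statement. I would keep the written proof to just a few lines.
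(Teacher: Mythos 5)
Your proof is correct, and it is exactly the formalization of the remark the paper makes immediately after stating the lemma (``we sort the solutions according to which blocks they intersect properly''); the paper itself gives no explicit proof beyond this one-line gloss. Introducing the profile map $S(H)$ and checking both inclusions is the right level of detail for a careful write-up, and your caveat that some $C_S$ may be empty is also appropriate.
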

In other words we sort the solutions according to which blocks they intersect properly.
\begin{lemma}\label{lem:mod4} For any  $S\subset\{1,\dots,v\}$ we have
\[ \twocases{|C_S|=} {2^{|S|} \cdot \ds\binom{v-|S|}{\left\lfloor\frac{a-|S|}2\right\rfloor}}
                      {\text{\rm $b$ is odd or $a-|S|\geq 0$  is even,}}
                       {0}{\text{\rm otherwise.}}{}   \]
\end{lemma}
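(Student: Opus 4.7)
The plan is a direct counting argument, splitting the construction of an element $H \in C_S$ into independent choices block-by-block.

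First I would observe that the blocks $B_1,\dots,B_v$ partition $\{b-2v+1,\dots,b\}$, so they cover all of $\{1,\dots,b\}$ when $b$ is even, and cover everything except the single element $1$ when $b$ is odd. Then, membership in $C_S$ decomposes into three independent pieces of data: (i) for each $i\in S$, a choice of exactly one element of the two-element block $B_i$, giving $2^{|S|}$ choices and contributing $|S|$ to $|H|$; (ii) for each $i\in\{1,\dots,v\}\setminus S$, a choice of either $0$ or $2$ elements from $B_i$, i.e.\ a choice of a subset $T\subset \{1,\dots,v\}\setminus S$ of blocks to include fully, contributing $2|T|$ elements; and (iii) only if $b$ is odd, a choice $e\in\{0,1\}$ of whether to include the unpaired element $1$, contributing $e$ elements.

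Next I would impose the cardinality constraint $|S|+2|T|+e=a$. When $b$ is even we have $e=0$ forced, so a solution exists iff $a-|S|\ge 0$ is even, in which case $|T|$ is uniquely determined to be $(a-|S|)/2$ and the number of choices of $T$ is $\binom{v-|S|}{(a-|S|)/2}$. When $b$ is odd, $e$ is free and is forced by parity to equal $(a-|S|)\bmod 2$, after which $|T|=\lfloor(a-|S|)/2\rfloor$; in this case the count is again $\binom{v-|S|}{\lfloor(a-|S|)/2\rfloor}$, with the convention that a negative lower index gives zero so the case $a<|S|$ is handled automatically.

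Multiplying the $2^{|S|}$ choices of type (i) with the choices of $T$ in type (ii) then gives the asserted formula in both cases, and yields $0$ in the remaining case ($b$ even with $a-|S|$ odd or negative). Since every step is essentially a counting tautology, I do not expect any real obstacle; the only thing to be careful about is the role of the unpaired element $1$ when $b$ is odd, which is exactly what lets the formula remain valid without the parity condition on $a-|S|$ in that case.
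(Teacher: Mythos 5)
Your counting argument is correct, and it is almost certainly the argument the authors have in mind: the paper states this as one of "three easy lemmas" and gives no written proof, so there is nothing to compare it against beyond the definitions. You correctly observe that the blocks $B_1,\dots,B_v$ tile $\{b-2v+1,\dots,b\}$, which is all of $\{1,\dots,b\}$ when $b$ is even and everything except the singleton $\{1\}$ when $b$ is odd; you then decompose the choice of $H\in C_S$ into the forced data on each block (one of two elements when $i\in S$; full or empty when $i\notin S$; the free bit $e$ on $\{1\}$ when $b$ is odd), impose $|S|+2|T|+e=a$, and read off the count. The parity bookkeeping is right: when $b$ is even, $e=0$ forces $a-|S|$ to be a non-negative even number, else $C_S=\emptyset$; when $b$ is odd, $e$ is uniquely determined by the parity of $a-|S|$ and the formula with $\lfloor\cdot\rfloor$ absorbs both parities, with the usual convention $\binom{n}{k}=0$ for $k<0$ handling $a<|S|$. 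Multiplying $2^{|S|}$ by $\binom{v-|S|}{\lfloor(a-|S|)/2\rfloor}$ gives exactly the stated formula.
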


\begin{lemma}\label{lem:signed-sum} For any non-empty $S\subset\{1,\dots,v\}$ we have $\displaystyle\sum\limits_{H\in C_S}\varepsilon(H)=0$.
\end{lemma}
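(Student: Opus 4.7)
The plan is to exhibit a fixed-point-free, sign-reversing involution on $C_S$; then pairing makes the signed sum vanish. First, observe that $C_S$ is empty unless $S\subset\{c+1,\dots,v\}$, since the defining condition of $\sol(a,b,c)$ forbids $|H\cap B_i|=1$ for $i\le c$. Assuming this and $S\ne\emptyset$, pick any $i_0\in S$; then $B_{i_0}=\{j,j+1\}$ with $j:=b-2i_0+1$ consists of two consecutive integers inside the ``real'' range $\{1,\dots,b-2c\}$, and each $H\in C_S$ contains exactly one of them.

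Next I would define $\iota\colon C_S\to C_S$ by
\[ \iota(H):=(H\setminus B_{i_0})\cup(B_{i_0}\setminus H), \]
i.e.\ flip which element of $B_{i_0}$ lies in $H$. Since the intersection pattern of $\iota(H)$ with every block agrees with that of $H$ (in particular $|\iota(H)\cap B_{i_0}|=1$ and $|\iota(H)|=|H|=a$), we have $\iota(H)\in C_S$. The map $\iota$ is clearly a fixed-point-free involution, so $C_S$ is partitioned into pairs $\{H,\iota(H)\}$.

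The key step is to check $\varepsilon(\iota(H))=-\varepsilon(H)$. View $\rho_H$ as the concatenation of the sorted ``front'' list of $H\cap\{1,\dots,b-2c\}$ followed by the sorted ``back'' list of $\{1,\dots,b-2c\}\setminus H$; exactly one of $j,j+1$ appears in each list. Since $j$ and $j+1$ are \emph{consecutive} integers and the other is absent from that list, replacing one by the other preserves sortedness of both lists in place. Consequently $\rho_{\iota(H)}$ differs from $\rho_H$ only by interchanging the values $j$ and $j+1$ at their two respective positions---a single transposition of values, hence a sign flip. The only real obstacle is this consecutive-integers bookkeeping; once granted, pairing yields $\sum_{H\in C_S}\varepsilon(H)=0$, as required.
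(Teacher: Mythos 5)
Your sign-reversing involution is correct and is surely what the authors had in mind---the paper omits the proof as one of three ``easy lemmas''. The crucial step, that swapping the consecutive values $j$ and $j+1$ in $\rho_H$ yields $\rho_{\iota(H)}$ (and hence flips the sign), is justified exactly as you say: since exactly one of $j,j+1$ appears in each of the front and back lists, replacing one by the other preserves sortedness of both lists in place, so the two one-line notations differ by a single transposition of values.

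One wrinkle, which is really in the paper's phrasing rather than a flaw in your argument: your opening observation that $C_S$ is empty unless $S\subset\{c+1,\dots,v\}$ is not true with the paper's literal definition of $C_S$, which imposes no membership in $\sol(a,b,c)$. For instance, with $b=6$, $a=3$, $c=3$ and $S=\{1\}$ one finds $|C_S|=4$, and every $H\in C_S$ has $\varepsilon(H)=+1$ because $\{1,\dots,b-2c\}=\emptyset$, so the signed sum is $4\neq 0$. Thus the lemma actually requires the hypothesis $S\subset\{c+1,\dots,v\}$ (which is all that is used, via Lemma~\ref{lem:cs-partition}), and correspondingly your $i_0$ must be taken in $\{c+1,\dots,v\}$ so that $B_{i_0}\subset\{1,\dots,b-2c\}$; for $i_0\le c$ the flip would leave $\rho_H$ unchanged and be sign-\emph{preserving}, not sign-reversing. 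You do rely on exactly this restriction, so under the intended hypothesis your proof is complete.
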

Lemma \ref{lem:mod4}. implies that
\[  N_{\R}(a,b,c)\equiv |C_\emptyset|+\sum_{i=c+1}^v|C_{\{i\}}|   \]
modulo 4, therefore 
\begin{theorem}\label{thm:mod4}\mbox{}

 \begin{enumerate}
  \item If $a$ or $b$ is odd then $N_{\R}(a,b,c)=|\sol(a,b,c)|$ is congruent to $|C_\emptyset|$ modulo 4 independently of $c$ if and only if
  \[ \frac12|C_{\{i\}}|=\ds\binom{v-1}{\left\lfloor\frac{a-1}2\right\rfloor} \]
is even.
  \item For $a=2u$ and $b=2v$  even, the number of solutions $N_{\R}(a,b,c)=|\sol(a,b,c)|$ is congruent to $|C_\emptyset|=\binom{v}{u}$ modulo 4 independently of $c$.

\end{enumerate}
\end{theorem}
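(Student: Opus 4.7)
The plan is to combine the three preparatory lemmas into a single mod~4 computation of $N_{\R}(a,b,c)$. First, I would start from the disjoint decomposition of Lemma \ref{lem:cs-partition} and write
\[N_{\R}(a,b,c)=|\sol(a,b,c)|=\sum_{S\subset\{c+1,\dots,v\}}|C_S|.\]
Lemma \ref{lem:mod4} shows that each $|C_S|$ is divisible by $2^{|S|}$, so the contribution of the terms with $|S|\ge 2$ vanishes modulo~4. Hence
\[N_{\R}(a,b,c)\equiv |C_\emptyset|+\sum_{i=c+1}^{v}|C_{\{i\}}|\pmod 4,\]
which reduces the theorem to understanding $|C_{\{i\}}|$ in terms of the parities of $a$ and $b$. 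Note that Lemma \ref{lem:signed-sum} plays no role in this argument; it is used only in the signed-sum discussion at the start of the section.

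For part~(2), I would assume $a=2u$ and $b=2v$. When $|S|=1$ the difference $a-|S|=2u-1$ is odd while $b$ is even, so the second clause of Lemma \ref{lem:mod4} forces $|C_{\{i\}}|=0$ for every $i$. The displayed sum then collapses to
\[N_{\R}(2u,2v,c)\equiv|C_\emptyset|=\binom{v}{u}\pmod 4,\]
with no dependence on $c$, as claimed.

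For part~(1), when $a$ or $b$ is odd, the first clause of Lemma \ref{lem:mod4} applies to every $|S|=1$ term: if $b$ is odd this is automatic, while if $a$ is odd and $b$ is even then $a-1$ is even and the parity condition is still satisfied. In both subcases $|C_{\{i\}}|=2\binom{v-1}{\lfloor(a-1)/2\rfloor}$, and summing over $i=c+1,\dots,v$ yields
\[N_{\R}(a,b,c)\equiv|C_\emptyset|+(v-c)\cdot 2\binom{v-1}{\lfloor(a-1)/2\rfloor}\pmod 4.\]
The right-hand side is independent of $c$ if and only if $2\binom{v-1}{\lfloor(a-1)/2\rfloor}\equiv 0\pmod 4$, i.e.\ if and only if $\binom{v-1}{\lfloor(a-1)/2\rfloor}$ is even, which is the stated criterion.

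The only real bookkeeping is the parity check in Lemma \ref{lem:mod4}, making sure the correct clause (vanishing vs.\ binomial formula) applies in each subcase; once that is settled, Theorem \ref{thm:mod4} follows immediately from the two displayed congruences.
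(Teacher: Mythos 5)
Your proof is correct and follows essentially the same route as the paper: the congruence $N_{\R}(a,b,c)\equiv |C_\emptyset|+\sum_{i=c+1}^v|C_{\{i\}}|\pmod 4$ obtained from Lemmas \ref{lem:cs-partition} and \ref{lem:mod4} is exactly how the paper frames the step, and your parity case analysis on $|C_{\{i\}}|$ (forcing it to vanish when $a,b$ are both even, and equal to $2\binom{v-1}{\lfloor(a-1)/2\rfloor}$ otherwise) is the same computation. You are also right that Lemma \ref{lem:signed-sum} plays no role here; it is needed only for Theorem \ref{thm:signed-sum}.
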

In fact for $a=2u$ and $b=2v$  even we can even give a formula,
\begin{equation}\label{eq:4adi}
N_{\R}(a,b,c)=\binom{v}{u}+\sum_{i=1}^v 4^i\binom{v-c}{2i}\binom{v-2i}{u-i},
\end{equation}
clearly implying the result in the even case.
For the non even case the smallest example satisfying the modulo 4 property is $b=6$ and $a=3$, so $|C_{\{i\}}|=\binom21$ and the number of solutions for $c=0,1,2,3$ are $20,4,4,0$, respectively.

Theorem \ref{thm:mod4} can be strengthened if further divisibility of  $a$ and $b$ is assumed:
 \begin{theorem}\label{thm:mod2toi} Suppose that $a$ and $b$ are both divisible by $2^k$. Then the number of solutions $N_{\R}(a,b,c)=|\sol(a,b,c)|$ is congruent to $|C_\emptyset|=\binom{v}{u}$ modulo $2^{k+1}$ independently of $c$.
 \end{theorem}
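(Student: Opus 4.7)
The plan is to start from formula (\ref{eq:4adi}) and bound the $2$-adic valuation of every term in the sum. Subtracting $\binom{v}{u}$, it suffices to prove
\[
\nu_{2}\!\left(4^{i}\binom{v-c}{2i}\binom{v-2i}{u-i}\right) \;\geq\; k+1 \qquad\text{for every } i \geq 1,
\]
where $\nu_{2}$ denotes $2$-adic valuation. The factor $\binom{v-c}{2i}$ is a non-negative integer that can be ignored, so the task reduces to showing $\nu_{2}\!\left(4^{i}\binom{v-2i}{u-i}\right) \geq k+1$ under the hypothesis that $u$ and $v$ are divisible by $2^{k-1}$ (which is equivalent to $2^{k}\mid a, b$).

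The argument splits into two ranges. For $i \geq 2^{k-1}$, the factor $4^{i}$ alone has $\nu_{2}(4^{i}) = 2i \geq 2^{k} \geq k+1$ whenever $k \geq 1$, so nothing more is needed; this already disposes of every term when $k=1$, recovering Theorem \ref{thm:mod4}. For $1 \leq i < 2^{k-1}$, additional powers of $2$ must be extracted from the binomial coefficient, and this is where divisibility enters. By Kummer's theorem, $\nu_{2}\!\left(\binom{v-2i}{u-i}\right)$ equals the number of carries when adding $u-i$ and $v-u-i$ in base $2$. Since $2^{k-1}$ divides both $u$ and $v$, the two summands are each congruent to $-i \pmod{2^{k-1}}$, so their lowest $k-1$ binary digits agree and equal the digits of $m := 2^{k-1}-i$. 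The carries at positions $0, 1, \ldots, k-2$ therefore coincide with the carries in $m+m$; since $2m$ is the left-shift of $m$, each $1$-bit of $m$ produces exactly one such carry, yielding $\nu_{2}\!\left(\binom{v-2i}{u-i}\right) \geq s_{2}(m)$, where $s_{2}$ denotes the binary digit sum.

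To finish, the identity $s_{2}(2^{k-1}-i) = (k-1) - s_{2}(i-1)$ holds for $1 \leq i \leq 2^{k-1}$, because the low $k-1$ bits of $2^{k-1}-1$ are all $1$s, so subtracting $i-1$ flips bits without any borrows. Combined with the elementary inequality $s_{2}(i-1) \leq i-1$, this gives $\nu_{2}\!\left(\binom{v-2i}{u-i}\right) \geq k - i$, and therefore $\nu_{2}\!\left(4^{i}\binom{v-2i}{u-i}\right) \geq 2i + (k-i) = k+i \geq k+1$, as required. The main delicate point is that Kummer's theorem counts carries at all bit positions while our hypothesis only controls the lowest $k-1$, so one must argue that the guaranteed low-bit carries alone suffice; the inequality $s_{2}(n) \leq n$ is exactly what is needed to make the two valuation estimates $2i$ and $k-i$ meet the threshold $k+1$.
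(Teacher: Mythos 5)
Your proof is correct and follows essentially the same route as the paper's: both start from formula (\ref{eq:4adi}), both invoke Kummer's theorem to count carries when adding $u-i$ and $v-u-i$, and both exploit that their low $k-1$ bits agree (each being $\equiv -i \pmod{2^{k-1}}$) to lower-bound the number of carries by the number of $1$'s in $2^{k-1}-i$. The only cosmetic differences are that you close via the identity $s_2(2^{k-1}-i)=(k-1)-s_2(i-1)$ and the inequality $s_2(i-1)\le i-1$, where the paper uses the equivalent estimate in terms of $\lfloor\log_2 i\rfloor$, and you split the range at $i=2^{k-1}$ rather than at $2i\le k$; both choices yield the same bound $\nu_2\ge k+i\ge k+1$.
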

 \begin{proof}
 From equation (\ref{eq:4adi}) we can see that the theorem follows from the divisibility of $\binom{v-2i}{u-i}$ by $2^{k+1-2i}$ under the conditions of the theorem and assuming $2i\leq k$. We use a theorem of Kummer \cite{kummer} on the divisibility of binomial coefficients:
 \begin{theorem}(Kummer) Given integers $n \geq m \geq 0$ and a prime number $p$, the maximum integer $k$ such that $p^k$ divides the binomial coefficient $\tbinom n m$ is equal to the number of carries when $m$ is added to $n - m$ in base $p$.
 \end{theorem}
 We use Kummer's theorem  for $n=v-2i$ and $m=u-i$. Since $2^{k-1}$ divides $u$ and $v-u$, $v-u-i$ and $u-i$ are the same modulo $2^{k-1}$. Therefore the number of 1's in the last $k-1$ digits gives a lower bound on the number of carries in base 2 when adding $v-u-i$ to $u-i$. The number of 1's in the last $k-1$ digits is clearly at least $k-1-\lfloor\log_2 i\rfloor$ in both $u-i$ and $v-u-i$. Finally to show $k-1-\lfloor\log_2 i\rfloor\geq k+1-2i$, we use the inequality $\lfloor\log_2i\rfloor+1\leq i$ ($i=1,2,\ldots$).
 \end{proof}

On the other hand Lemma \ref{lem:cs-partition}. and \ref{lem:signed-sum} immediately implies that
\begin{theorem}\label{thm:signed-sum} For $u=\lfloor a/2\rfloor$ and $v=\lfloor b/2\rfloor$ the signed sum of the solutions
\[ \twocases{\sum_{H\in \sol(a,b,c)}\varepsilon(H)=}{\ 0}{\text{\rm $a$ odd and $b$ even}}{\ds\binom{v}{u}}{\text{otherwise,}}{}  \]
agreeing with the minimal number of solutions as promised.
\end{theorem}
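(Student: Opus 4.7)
The plan is to combine the two preceding lemmas to collapse the signed sum to a sum over $C_\emptyset$, then evaluate $|C_\emptyset|$ and verify that every element of $C_\emptyset$ has sign $+1$. By Lemma \ref{lem:cs-partition} we have
\[ \sum_{H\in\sol(a,b,c)}\varepsilon(H) \;=\; \sum_{S\subset\{c+1,\dots,v\}}\,\sum_{H\in C_S}\varepsilon(H), \]
and Lemma \ref{lem:signed-sum} kills every term with $S\neq\emptyset$. So the entire theorem reduces to computing $\sum_{H\in C_\emptyset}\varepsilon(H)$.

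Next I would describe $C_\emptyset$ explicitly. An element $H\in C_\emptyset$ is an $a$-subset of $\{1,\dots,b\}$ whose intersection with each block $B_i$ ($i=1,\dots,v$) is either all of $B_i$ or empty. When $b=2v$, this forces $H$ to be a union of blocks, hence $|H|$ is even: if $a=2u$ is even we get $|C_\emptyset|=\binom{v}{u}$, while if $a$ is odd we get $|C_\emptyset|=0$ (explaining the $0$-case of the theorem). When $b=2v+1$, the element $1$ is the unique element outside all blocks, and $H$ consists of some full blocks plus possibly $\{1\}$; a case check on the parity of $a$ shows $|C_\emptyset|=\binom{v}{\lfloor a/2\rfloor}$.

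The technical heart of the argument will be showing $\varepsilon(H)=+1$ for every $H\in C_\emptyset$. The permutation $\rho_H\in S_{b-2c}$ rearranges $\{1,\dots,b-2c\}$ by placing $H\cap\{1,\dots,b-2c\}$ first and its complement afterwards. Since $H$ is a disjoint union of complete blocks (plus possibly $\{1\}$), both halves of $\rho_H$ are built from such blocks, so $\rho_H$ factors as a product of adjacent block-swaps. The swap of two adjacent $2$-blocks $\{i,i+1\}$ and $\{i+2,i+3\}$ is the product of transpositions $(i,i+2)(i+1,i+3)$, which is even; and moving the singleton $\{1\}$ past an adjacent block $\{2,3\}$ is the $3$-cycle $(1\ 2\ 3)$, also even. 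Thus each elementary move is even, and $\rho_H$ itself is even.

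Combining the three steps gives $\sum_{H\in\sol(a,b,c)}\varepsilon(H)=|C_\emptyset|$, which equals $\binom{v}{u}$ in all cases except ($a$ odd, $b$ even), when it equals $0$, matching the theorem statement and the lower bound observed in Remark \ref{bounds}. The main obstacle I expect is the sign bookkeeping in the third step, particularly handling the odd-$b$ singleton cleanly; the rest is essentially an application of the two preceding lemmas.
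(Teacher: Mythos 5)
Your proposal is correct and takes essentially the same route as the paper, which simply cites Lemma \ref{lem:cs-partition} and Lemma \ref{lem:signed-sum} and declares the theorem immediate. Your contribution is to spell out the step the paper leaves implicit: that every $H\in C_\emptyset$ has $\varepsilon(H)=+1$, so the surviving signed sum $\sum_{H\in C_\emptyset}\varepsilon(H)$ really does equal $|C_\emptyset|$ (you also compute $|C_\emptyset|$ directly rather than reading it off from Lemma \ref{lem:mod4} at $S=\emptyset$, which is the route the paper implicitly uses to get $\binom{v}{u}$). The block-bubble-sort argument for the parity of $\rho_H$ is sound — one could equivalently count inversions: the inversions of $\rho_H$ come in quadruples when both elements lie in $2$-blocks, and the singleton (present only when $b$ is odd) contributes $|H\cap\{1,\dots,b-2c\}|$ or $|\{1,\dots,b-2c\}\setminus H|$ further inversions, both even for $H\in C_\emptyset$ — but your version is fine.
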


\section{Chambers} \label{sec:chambers}
 As we saw, we have a good understanding of the solution function without a complete list of the chambers, but for the sake of completeness we give a description. Four-tuples of subspaces were studied frequently in the last 70 years, especially subspaces of middle dimension, called \emph{medials} by Turnbull in \cite{turnbull_projective_1942}. Most of the calculations in the section appeared in some versions before.

Let us denote the subspace of generic (in the sense of Definition \ref{de:generic}) four-tuples of spaces in $\Gr_b(\R^{2b})^4$ by $GC$ (as generic configurations). Notice that the group $\GL(2b)$ acts on $GC$. For every generic configuration $V_1,\dots,V_4$ we assigned a linear map $\varphi:V_1\to V_1$. By taking the characteristic polynomial $\chi_\varphi(\lambda):=\det(\lambda I-\varphi)$ we can define a map $\chi:GC\to P$, where $P\iso \R^b$ is the space of degree $b$ polynomials with leading coefficient 1.
\begin{proposition} The fibers of $\chi$ are exactly the orbits of $\GL(2b)$ acting on $GC$. \end{proposition}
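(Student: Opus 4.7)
The plan is to prove both containments: (i) every $GL(2b)$-orbit maps to a single point of $P$, and (ii) the entire fiber of $\chi$ over such a point is a single orbit.

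The forward inclusion is a naturality argument. If $g \in GL(2b)$ sends $(V_1,\dots,V_4)$ to $(V_1',\dots,V_4')$, then directly from the definition $g \circ \gamma^{V_i \subset E}_{V_1 \to V_2} = \gamma^{V_i' \subset E}_{V_1' \to V_2'} \circ g$, so the restriction $g|_{V_1}: V_1 \to V_1'$ conjugates $\varphi$ to $\varphi'$. Similar matrices have the same characteristic polynomial, so $\chi$ is constant on orbits.

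For the reverse inclusion, I would first normalize each generic configuration to the explicit model of Remark \ref{reverse}. Given $(V_1,\dots,V_4)$, pick any basis $e_1,\dots,e_b$ of $V_1$ and let $f_i := \gamma_3(e_i)$, so $(e_1,\dots,e_b,f_1,\dots,f_b)$ is a basis of $E = V_1 \oplus V_2$. In this basis $V_3$ becomes the diagonal, and writing the matrix $M$ of $\varphi$ in the basis $(e_i)$ we compute $\gamma_4(e_i) = \gamma_3(\varphi(e_i)) = \sum_j M_{ji} f_j$, so $V_4$ becomes the graph of $\varphi$. Identifying $E$ with $F\oplus F$ via this basis (with $F \cong \R^b$) realizes $(V_1,\dots,V_4)$ as the standard configuration of Remark \ref{reverse} associated to $\varphi$. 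The basis change is an element of $GL(2b)$, so every generic configuration is $GL(2b)$-equivalent to a standard one.

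Now suppose $(V_1,\dots,V_4)$ and $(V_1',\dots,V_4')$ lie in the same fiber of $\chi$, so the associated maps $\varphi, \varphi'$ have the same characteristic polynomial. By genericity, $\varphi$ and $\varphi'$ have $b$ distinct eigenvalues, hence the minimal polynomial equals the characteristic polynomial, and the real rational canonical form is determined by the characteristic polynomial alone. Therefore $\varphi$ and $\varphi'$ are conjugate over $\R$: there exists $g \in GL(F)$ with $\varphi' = g\varphi g^{-1}$. Then $g \oplus g \in GL(F \oplus F) = GL(2b)$ preserves $V_1 = F \oplus 0$, $V_2 = 0 \oplus F$, the diagonal $V_3$, and sends the graph of $\varphi$ to the graph of $\varphi'$ via $(f,\varphi f) \mapsto (gf,\varphi' gf)$. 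Composing with the normalizations from the first step produces a $GL(2b)$-element carrying one original configuration to the other.

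The only nontrivial ingredient is the step asserting that two real matrices with distinct eigenvalues and identical characteristic polynomials are $GL(b,\R)$-conjugate; I would regard this as the standard fact and cite it rather than reprove it. The genericity condition (no eigenvalue $0$ or $1$, all distinct) is used exactly to make both the reduction to standard form (which needs pairwise transversality, built into the graph construction) and the conjugacy step (which needs simple spectrum) work.
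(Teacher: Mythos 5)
Your argument is correct and follows essentially the same route as the paper: both directions rest on the observation that for $g \in \GL(2b)$ the induced map is $g|_{V_1}\varphi(g|_{V_1})^{-1}$, and, conversely, on picking a linear map $A_1:V_1\to V_1'$ conjugating $\varphi$ to $\varphi'$ (possible since the simple spectrum makes the characteristic polynomial determine the real conjugacy class) and extending it to all of $E=V_1\oplus V_2$ via $A_2:=\gamma_3' A_1 \gamma_3^{-1}$. Your detour through the normal form of Remark~\ref{reverse} is just a repackaging of this same construction, with the identification $E\cong F\oplus F$ coming from $\gamma_3$, so the content is the same.
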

\begin{proof} Let $V_1,\dots,V_4$ and $V_1^{'},\dots,V_4^{'}$ be two configurations in $GC$. If $\varphi:V_1\to V_1$ and $\varphi':V^{'}_1\to V^{'}_1$ has the same characteristic polynomial, then there is a linear map $A_1:V_1\to V^{'}_1$ such that $A_1\varphi= \varphi'A_1$ (recall that all eigenvalues of $\varphi$ are distinct). We define $A_2:V_2\to V^{'}_2$ as $A_2:=\gamma_3^{'}A\gamma_3^{-1}$, using the notation of Section \ref{sec:graph}. Then it is elementary to check that the linear map $ A(v_1+ v_2):= A_1(v_1)+A_2(v_2)$  maps the configuration $V_1,\dots,V_4$ to $V_1^{'},\dots,V_4^{'}$.

On the other hand for $g \in \GL(2b)$ the linear map assigned to the configuration $gV_1,\dots,gV_4$ is $g|_{V_1}\varphi (g|_{V_1})^{-1}$, so they have the same characteristic polynomial.
\end{proof}
It is also elementary to check the following:
\begin{proposition} The stabilizer subgroup $H$ of $(V_1,\dots,V_4)\in GC$ is isomorphic to
\[  \{\tau\in\GL(V_1): \tau\varphi=\varphi\tau\}\iso\GL(1,\C)^c\times\GL(1,\R)^{b-2c},\]
 where $c$ is the number of complex conjugate pairs of eigenvalues of $\varphi$ as before. $H$ is the image of $\rho(\tau):=\smx{\tau}{0}{0}{\gamma_3\tau\gamma_3^{-1}}$.
\end{proposition}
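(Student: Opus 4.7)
The plan is to unfold the stabilizer condition on each of the four subspaces in turn, reducing to a single commutation condition on $V_1$, and then to identify the centralizer of $\varphi$ in $\GL(V_1)$ using the fact that $\varphi$ has $b$ distinct eigenvalues.

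First, any $g\in H$ must preserve $V_1$ and $V_2$, and since $E = V_1\oplus V_2$, we can write $g$ as a block-diagonal matrix $g = \smx{\tau_1}{0}{0}{\tau_2}$ with $\tau_1 \in \GL(V_1)$ and $\tau_2\in\GL(V_2)$ in this decomposition. Next, recall that $V_3$ is the graph of $\gamma_3:V_1\to V_2$, i.e.\ the set of vectors of the form $v + \gamma_3(v)$. Requiring $gV_3 = V_3$ means that $\tau_1 v + \tau_2\gamma_3(v)$ is again on the graph for every $v$, which forces $\tau_2\gamma_3 = \gamma_3\tau_1$. The same argument for $V_4$ gives $\tau_2\gamma_4 = \gamma_4\tau_1$. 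Eliminating $\tau_2$ between these two relations yields $\gamma_3\tau_1\gamma_3^{-1} = \gamma_4\tau_1\gamma_4^{-1}$, which rearranges to $\tau_1\varphi = \varphi\tau_1$ with $\varphi = \gamma_3^{-1}\gamma_4$. Conversely, given any $\tau \in \GL(V_1)$ commuting with $\varphi$, setting $\tau_2 := \gamma_3\tau\gamma_3^{-1}$ automatically satisfies both graph conditions, so $\rho(\tau) = \smx{\tau}{0}{0}{\gamma_3\tau\gamma_3^{-1}}$ lies in $H$. This gives the first isomorphism $H \iso \{\tau\in\GL(V_1): \tau\varphi=\varphi\tau\}$ and identifies $\rho$.

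It remains to compute the centralizer of $\varphi$ in $\GL(V_1)$. By the genericity hypothesis (Remark \ref{rem:distinct} and the definition of general position) $\varphi$ has $b-2c$ distinct real eigenvalues and $c$ distinct complex conjugate pairs. The plan is to decompose $V_1$ as a $\varphi$-module into a direct sum of $b-2c$ one-dimensional real eigenspaces and $c$ two-dimensional real $\varphi$-invariant subspaces, each of the latter being the real form of a conjugate pair (so $\varphi$ acts there as a $2\times 2$ matrix with complex eigenvalues). A commuting endomorphism must preserve this decomposition because the factors are the isotypic components of $V_1$ as an $\R[\varphi]$-module. On each one-dimensional real block the centralizer is $\R^* = \GL(1,\R)$; on each two-dimensional complex block the centralizer is isomorphic to $\C^* = \GL(1,\C)$, since commuting with a $2\times 2$ real matrix with non-real eigenvalues yields exactly the $\R$-algebra $\R[\varphi|_{\text{block}}] \iso \C$ restricted to its units. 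Taking the product over all blocks gives the advertised $\GL(1,\C)^c \times \GL(1,\R)^{b-2c}$.

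The only mildly non-routine step is the last identification of the centralizer over $\R$: one must invoke the structure theorem for the real $\R[x]$-module $V_1$ (equivalently, real rational canonical form), and verify that in the complex conjugate case the centralizer of a single $2\times 2$ block is the field $\C$ and not merely $\R^2$. Everything else is direct bookkeeping with the graph descriptions of $V_3,V_4$.
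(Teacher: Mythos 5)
Your proof is correct, and it fills in exactly the direct verification that the paper leaves to the reader with the phrase ``it is also elementary to check.'' The route is the natural one: block-diagonality from $gV_1=V_1$, $gV_2=V_2$, the graph descriptions of $V_3,V_4$ forcing $\tau_2=\gamma_3\tau_1\gamma_3^{-1}=\gamma_4\tau_1\gamma_4^{-1}$ and hence $\tau_1\varphi=\varphi\tau_1$, and then the standard identification of the real centralizer of a semisimple operator with distinct eigenvalues as $\GL(1,\C)^c\times\GL(1,\R)^{b-2c}$.
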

Notice that the determinant of $\rho(\tau)$ is always positive, consequently all fibers have 2 components.

In the space of characteristic polynomials $P$ the degeneracy conditions are easy to describe: according to Definition \ref{de:generic} all eigenvalues have to be different and 0 and 1 are not allowed. Therefore connected components of $\im \chi$ are labelled by quadruples of non-negative integers $(c,x,y,z)$, such that $2c+x+y+z=b$ ; $x$ denoting the number of real eigenvalues in $(-\infty,0)$, $y$ the number of them in $(0,1)$ and $z$ the number in $(1,\infty)$. The preimage of such a component  either splits into two chambers or it is connected. We claim that it always splits. The first three subspaces determine an orientation of $E$: pick a basis $e_1,\dots e_b$ in $V_1$, then $e_1,\dots e_b,\gamma_3(e_1),\dots \gamma_3(e_b)$ is a basis of $E$ and it is easy to see that different choices for a basis in $V_1$ define this way the same orientation on $E$. A routine calculation shows that for $g\in \GL(2b)$ the three subspaces $gV_1,gV_2,gV_3$ define the same orientation as the $V_i$'s if and only if $\det(g)>0$. This implies that if $\det(g)<0$ then a generic configuration and its $g$-image are always in different chambers, implying our claim. Summing up:
\begin{proposition}The connected components of the space of generic configurations can be labelled by quadruples of non-negative integers $(c,x,y,z)$, such that $2c+x+y+z=b$, and a $\pm$, depending on the orientation defined by the first three subspaces.
\end{proposition}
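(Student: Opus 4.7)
\medskip
\noindent\textbf{Proof proposal.} The plan is to combine three observations: (a) a description of the connected components of $\im\chi$; (b) the fact that each fiber of $\chi$ has exactly two connected components; and (c) the construction of a locally constant $\pm$-valued function on $GC$ that separates the two halves of each fiber.

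For (a), I would first check that a monic real polynomial $\chi(\lambda)$ of degree $b$ lies in $\im\chi$ exactly when its roots are distinct and none is $0$ or $1$ (this is Definition \ref{de:generic} together with Remark \ref{reverse}, which shows that every admissible $\varphi$ is realized). Decomposing such a polynomial into real linear factors and real irreducible quadratic factors gives the invariants $(c,x,y,z)$ with $2c+x+y+z=b$, where $x,y,z$ count real roots in $(-\infty,0)$, $(0,1)$, $(1,\infty)$ respectively. I would then argue that the locus with fixed $(c,x,y,z)$ is connected: real roots can be slid within their interval without collisions, and conjugate pairs can be moved within the open upper half plane minus $\{0,1\}$ and then away from any other real root. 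Distinct quadruples give disjoint, relatively open and closed subsets, so these loci are precisely the connected components of $\im\chi$.

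For (b), I would start from the two preceding propositions: a fiber of $\chi$ is a single orbit $\GL(2b)\cdot(V_1,\dots,V_4)$, and the stabilizer $H$ lies in $\GL^+(2b)$ because $\det\rho(\tau)>0$. Thus each fiber surjects onto $\GL(2b)/\GL^+(2b)=\Z/2$, so it has at least two components. To get exactly two, I would verify that $\GL^+(2b)/H$ is connected; since $\GL^+(2b)$ is connected and $H$ is a closed subgroup, this is automatic.

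For (c), I would define $\sigma:GC\to\{\pm1\}$ as follows: pick any ordered basis $e_1,\dots,e_b$ of $V_1$ and compare the orientation of $(e_1,\dots,e_b,\gamma_3(e_1),\dots,\gamma_3(e_b))$ to the standard orientation of $\R^{2b}$. A change of basis $\tau\in\GL(V_1)$ changes this frame by a block matrix $\operatorname{diag}(\tau,\tau)$ (since $\gamma_3$ is $\R$-linear and the new frame on $V_2$ is obtained by applying $\tau$ to the original), whose determinant is $\det(\tau)^2>0$, so $\sigma$ is independent of the choice and clearly continuous, hence locally constant on $GC$. For $g\in\GL(2b)$ the frame transforms by $g$, so $\sigma(g\cdot(V_1,\dots,V_4))=\operatorname{sgn}(\det g)\cdot\sigma(V_1,\dots,V_4)$, which shows $\sigma$ distinguishes the two $\GL^+(2b)$-cosets inside each fiber. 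Combining (a)--(c), the connected components of $GC$ are in bijection with pairs $\bigl((c,x,y,z),\pm\bigr)$.

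The main obstacle, in my view, is the routine but non-trivial verification that under a change of ordered basis on $V_1$ the induced frame on $V_2$ really does transform by the same $\tau$ (this uses the linearity of $\gamma_3$ over $\R$ in an essential way), so that $\sigma$ is well-defined. Everything else is either a topological triviality or already established in the propositions preceding the statement.
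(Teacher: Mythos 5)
Your proposal follows the paper's argument essentially step for step: first describe the components of $\im\chi$ by the quadruple $(c,x,y,z)$, then observe that each fiber of $\chi$ has exactly two components because the stabilizer $H$ lands in $\GL^+(2b)$, and finally separate those two halves with the orientation invariant built from the frame $(e_1,\dots,e_b,\gamma_3(e_1),\dots,\gamma_3(e_b))$, checking as the paper does that a basis change on $V_1$ acts by $\operatorname{diag}(\tau,\tau)$ and that a global $g$ changes the sign by $\operatorname{sgn}\det g$. The one place where ``combining (a)--(c)'' is a little quick --- both in your write-up and in the paper's --- is the passage from ``$\sigma$ is locally constant and nonconstant on each fiber'' to ``$\sigma^{-1}(\pm)\cap\chi^{-1}(K)$ is connected for each component $K$ of $\im\chi$''; this needs a one-line extra argument, e.g.\ that $\chi$ admits a continuous section over each $K$ (Remark \ref{reverse} via companion matrices), so that $\chi$ restricted to $\sigma^{-1}(+)$ is an open surjection with connected fibers onto a connected base. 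With that remark added, your proof is complete and matches the paper's.
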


\section{Connection with quivers}\label{sec:quiver}
This section is not necessary for our main result, but we believe it helps to connect with other notions the reader might be familiar with. We assume some familiarity with the theory of quivers (see e.g. \cite{ringel_rational_1980}).

Four-space configurations are naturally linked to $D_4$-quivers: Let $E$ and $E_i,\ i=1,2,3,4$ be vector spaces and let
\[ Q=\bigoplus_{i=1}^4\Hom(E_i,E)  \]
denote the corresponding (inwardly oriented) $D_4$-quiver representation space. If $q=(q_1,q_2,q_3,q_4)\in Q$, then $V_i:=\im q_i ,\ i=1,2,3,4$ defines a configuration of four subspaces in $E$. The configuration is generic in the sense of Definition \ref{de:generic}. if and only if the corresponding module $M_q$ splits into non isomorphic indecomposables such that
\begin{itemize}
  \item all occuring indecomposable have dimension vector $(2,1,1,1,1)$ if the base field is the complex numbers,
  \item all occuring indecomposable have dimension vector $(2,1,1,1,1)$ or $(4,2,2,2,2)$ if the base field is the real numbers.
\end{itemize}
We call these quiver representations $q$ \emph{generic}.
These indecomposables of course correspond to our minimal balanced subspaces. The group $\GL(Q):=\GL(E)\times \bigtimes_{i=1}^4\GL(E_i)$ acts on $Q$ and it is easy to see that orbits of generic quiver representations correspond bijectively to linear equivalence classes of generic four-space configurations.

We can identify $2a$-dimensional balanced subspaces with submodules with  dimension vector $(2a,a,a,a,a)$.
The connection can be developed further as the characteristic polynomial map $\chi$ of the previous section can be identified with the quotient map for the geometric quotient of the quiver representation space.
\section{Cross ratios: the geometric meaning of the signs}\label{sec:cross}
If $b=1$ then $\varphi(v)=\alpha v$ for the single eigenvalue $\alpha$. It is easy to see (\cite{kaplenko-ponomarev}) that $\alpha$ is the cross ratio of the four points given by $[V_i]\in\R\!\P^1=\P(E)$. (Or $1/\alpha$ depending on the convention for cross ratio.) Therefore for general $b$ the real eigenvalues of $\varphi$ can be identified with cross ratios in $\R\!\P^1\iso S^1$ corresponding to 2-dimensional balanced subspaces and their intersection with the subspaces $V_i$. If a balanced subspace $W$ is given, then these points in $\R\!\P^1$ are divided into "front" and "back" points (see Theorem \ref{thm:sign-alg-all}). The sign of $W$ is positive if and only if the number of transpositions needed to "separate" the front and back points is even.

Applying a permutation $\rho\in S_4$ to the  the $V_i$'s we change the cross ratios by a Moebius transformation (This is the so-called anharmonic representation of $S_4$.) It is easy to see that such (continuous) transformation of $\R\!\P^1$ does not change the sign described above. This shows that the sign of a solution can be defined for \emph{unordered} four-tuples of subspaces as well. Here we will have less chambers, as the $S_4$-action glues together some of them.

\section{What is the solution of a real enumerative problem?}\label{sec:what}
Given a real enumerative problem ideally we would like to calculate the following data:
\begin{enumerate}
  \item Description of the degenerate configurations, i.e. determining the "walls" of the chambers.
  \item Description of the chambers.
  \item The values of the solution function on the chambers.
  \item Cohomological interpretation.
  \item Calculation of the signs of the solutions and combinatorial or geometric description.
  \item Number theoretic properties of the solution function.
\end{enumerate}
We don't expect to answer all these even for Schubert problems. Sometimes finding all chambers doesn't help much in understanding the solution function. Sometimes it is not possible to give a cohomological interpretation and meaningful definition for signs. But at least we would like to see a sharp lower and upper bound for the number of solutions.

We know one other example where the above program was carried out, the case of lines on a smooth cubic surface. The complex case can be solved with cohomological methods. For the convenience of the reader we quickly review it: A cubic polynomial $q$ defines a section $\sigma_q$ of the bundle $\sym^3(S^*)$ over the space of projective lines $\gr_2(\C^4)$ by restriction ($S$ denotes the tautological subbundle over the Grassmannian). Lines on the cubic surface defined by $q$ correspond to zeroes of $\sigma_q$, so for generic $q$ the number can be calculated from the Euler class of $\sym^3(S^*)$. The integral is easy to calculate:
\[\int_{\gr_2(\C^4)}e\big(\sym^3(S^*)\big)=27. \]
The real case was treated by B. Segre in \cite{segre} who showed that the number of lines is $3,7,15$ or $27$. He also distinguished two types of lines: take a point of a line on the cubic surface. The intersection of the cubic surface and (projective) tangent plane at this point will be a plane cubic curve, one component of which is the line itself. The other component is either an ellipse or a hyperbola. Segre called these lines \emph{elliptic} and \emph{hyperbolic}, respectively.

The cohomological calculation can be carried out in the real case as well (see e.g. \cite{okonek-teleman}):
\[\int_{\gr_2(\R^4)}e\big(\sym^3(S^*)\big)=3, \]
which explains the minimum of the solution function. Signs are defined as the sign of the intersection of the section $\sigma_q$ and the zero section. This was calculated in \cite{okonek-teleman}. The different signs were identified with the elliptic and hyperbolic types in \cite{finashin_abundance_2012}. An explanation for the modulo 4 property was given in \cite{benedetti_spin_1995}.

\bibliography{signs}
\bibliographystyle{alpha}
\end{document}